\newtheorem{theorem}{Theorem}[section]
\newtheorem{algorithm}{Algorithm}[section]
\newtheorem{lemma}[theorem]{Lemma}
\theoremstyle{definition}
\newtheorem{example}[theorem]{Example}
\theoremstyle{remark}
\newtheorem{remark}[theorem]{Remark}
\numberwithin{equation}{section}
\numberwithin{figure}{section}
\definecolor{black}{rgb}{0,0,0}
\definecolor{red}{rgb}{1,0,0}
\definecolor{blue}{rgb}{0,0,1}
\newcommand{\nn}{\nonumber}
\newcommand{\Ct}{{\mathcal T}}
\newcommand{\Ce}{{\mathcal E}}
\def\bfa{\textit{\textbf{a}}}
\def\bfn{\textit{\textbf{n}}}
\def\bfu{\textit{\textbf{u}}}
\def\bfv{\textit{\textbf{v}}}
\def\bfw{\textit{\textbf{w}}}
\def\bfx{\textit{\textbf{x}}}
\def\bfI{\textit{\textbf{I}}}
\def\bfK{\textit{\textbf{K}}}
\def\bfU{\textit{\textbf{U}}}
\def\calT{\mathcal{T}}
\begin{document}

\title[Physics-preserving IMPES schemes for two-phase flow in porous media]{A new physics-preserving IMPES scheme for incompressible and immiscible two-phase flow in heterogeneous porous media}


\author{Huangxin Chen}
\address{School of Mathematical Sciences and Fujian Provincial Key Laboratory on Mathematical Modeling and High Performance Scientific Computing, Xiamen University, Fujian, 361005, China}
\email{chx@xmu.edu.cn}

\author{Shuyu Sun$^\ast$}
\address{Computational Transport Phenomena Laboratory, Division of Physical Science and Engineering, King Abdullah University of Science and Technology, Thuwal 23955-6900, Kingdom of Saudi Arabia}
\email{shuyu.sun@kaust.edu.sa}

\thanks{$^\ast$Corresponding author: Shuyu Sun (shuyu.sun@kaust.edu.sa)}


\subjclass[2010]{65M60,65N30,76S05}
\keywords{Two-phase flow, heterogeneous porous media, capillary pressure, IMPES, upwind mixed finite element method, conservation of mass, bounds-preserving, unbiased.}

\date{}

\begin{abstract}
In this work we consider a new efficient IMplicit Pressure Explicit Saturation (IMPES) scheme for the simulation of incompressible and immiscible two-phase flow in heterogeneous porous media with capillary pressure. Compared with the conventional IMPES schemes, the new IMPES scheme is inherently physics-preserving, namely, the new algorithm is locally mass conservative for both phases and it also enjoys another appealing feature that the total velocity is continuous in the normal direction. Moreover, the new scheme is unbiased with regard to the two phases and the saturations of both phases are bounds-preserving if the time step size is smaller than a certain value. The key ideas in the new scheme include that the Darcy flows for both phases are rewritten in the formulation based on the total velocity and an auxiliary velocity referring to as the capillary potential gradient, and the total discretized conservation equation is obtained by the summation of the discretized conservation equation for each phase. The upwind strategy is applied to update the saturations explicitly, and the upwind mixed finite element methods are used to solve the pressure-velocity systems which can be decoupled. We also present some interesting examples to demonstrate the efficiency and robustness of the new algorithm.
\end{abstract}

\maketitle

\section{Introduction}
Simulation of subsurface flow has been of importance in a wide range of industrial applications such as hydrology and petroleum reservoir engineering. In this paper we focus on developing physics-preserving schemes for the simulation of incompressible and immiscible two-phase flow in heterogeneous porous media with capillary pressure. Various types of numerical methods have been developed in literature to simulate two-phase flow in porous media. The fully implicit scheme \cite{Aziz1979,Collins1992,Dawson1997,Monteagudo2007,Wu2009,Wang2013,Zidane2015,haijian-jcp,Yang2016,Yang2018} implicitly solves all the unknowns, and it thus could deserve unconditional stability and mass conservation for both of phases. The fully implicit scheme may deserve large time step size during the simulation and the difficulty of simulation lies in the resolution of nonlinear complex systems. The IMplicit-EXplicit scheme \cite{Ascher1995,Branets2009,Frank1997,Hundsdorfer2007,Koto2009,LeeEf2018} treats the linear terms implicitly and evaluates the others explicitly, and it deserves better stability than the fully explicit scheme. Operator splitting method \cite{Abreu2009,Farago2008,Lanser1999} is another efficient approach to reduce a complex time-dependent problem into some simpler problems. The IMPES scheme was originally developed by Sheldon et al. in 1959 \cite{Sheldon1959} and Stone et al. in 1961 \cite{Stone1961}. It has been widely applied to solve the coupled nonlinear system for the two-phase flow in porous media \cite{Fagin1966,Young1983,Coats1999,Coats2001,Coats2001_2,Chen2004,Chen2006}. The main idea of the IMPES scheme is to separate the computation of pressure from that of saturation, then the pressure and saturation equations are solved using implicit and explicit time approximation schemes respectively. The IMPES scheme is simple to set up and efficient to implement, and it requires less computer memory compared with the fully implicit schemes. 

For the two-phase flow in heterogeneous porous media with capillary pressure, the saturations may be discontinuous due to different capillary pressure functions, which are often resulted from the heterogeneous permeability distribution. In this case, the standard IMPES scheme \cite{Sheldon1959,Stone1961} does not reproduce the correct solutions because the standard IMPES scheme always generates spatially continuous saturation if capillarity exists. An improved IMPES scheme was proposed by Hoteit and Firoozabadi \cite{Hoteit2008_1,Hoteit2008_2} (HF-IMPES) to treat this problem. For different capillary pressures, the HF-IMPES scheme can reproduce the saturation solution with expected discontinuity. However, both the standard IMPES scheme and the HF-IMPES scheme are only mass-conservative for the wetting phase, and thus are not mass-conservative for the total fluid mixture. Moreover, both the standard IMPES scheme and the HF-IMPES scheme might produce a wetting-phase saturation which is larger than one. Until now, various kinds of improved IMPES schemes have been introduced for the simulation of two-phase flow in porous media (cf. e.g. \cite{Young1983,Thomas1983,Watts1985,Lu2000,Qin2000,Chen2004,Aarnes2006,Chen2006,Durlofsky2007,Ern2010,Kou2010,Kou2010_2,Kou2013,Kou2014,Faigle2015,Hou2016,Presho2016,Fabien2018} and the references therein), which include the sequential IMPES scheme, the iterative IMPES scheme, the adaptive implicit techniques, et al. We also note that some improvements to IMPES scheme are mainly developed for stability in presence of large time step size, for instance, potential ordering technique \cite{Kwok2007}, adaptive implicit methods \cite{Thomas1983,Collins1992} are examples of these extensions for stability purpose. We note that the adaptive implicit methods are not fully consistent at the interfaces of implicit-explicit sub-domains.

Our objective in this paper is to propose a more physics-preserving IMPES scheme for the two-phase flow in heterogeneous porous media with capillary pressure which inherently preserves the full mass conservation locally and retains the continuity of the total velocity in its normal direction. The Darcy flows for both phases are rewritten in the formulation based on the total velocity and an auxiliary velocity referring to as the capillary potential gradient. An upwind scheme is used in the spacial discretization for the conservation equation of each phase, and then the total conservation equation is also obtained by summing the discretized conservation equations of each phase. Various upwind schemes have been proposed in the literature (cf. \cite{Brenier1991,LeeEf2018} and the references therein) for multi-phase flow in porous media. Since the main focus of this work is to develop mass-conservative property for both of phases, we only use the conventional upwind scheme for the discretization of convection term. Using the unknowns for the total velocity, the auxiliary velocity and the pressures of both phases, we apply the mixed finite element method with upwind scheme to solve the pressure-velocity system. In the fully discrete system, the total linear system for velocities, phase pressures and phase saturations can be obtained when the wetting phase saturation in the former step is given. After summing the discretized conservation equation for each phase, the total discretized conservation equation can be obtained. We remark that the total linear system can be decoupled into several steps, for instance, the coupled pressure-velocity system can be decoupled into two decoupled systems which are well-posed, and then the saturations of both phases can be explicitly updated. Since the discrete mass conservation equation for each phase are both solved in the total linear system, the mass conservation property can hold true for both phases. This is the key point to ensure our new algorithm mass-conservative for each of the two phases locally (and globally).

We mention that the new IMPES scheme is unbiased with regard to the two phases. Moreover, the proposed scheme is conditionally bounds-preserving; that is, the computed saturation of each phase always sits within its physical bounds if the time step size is smaller than a certain value. Regarding the fact that it is always difficult to solve the nonlinear complex system in fully implicit scheme, we remark that our newly proposed physics-preserving IMPES scheme may also be treated as a preconditioner for the algebraic system from the fully implicit scheme. We also note that a generalization of IMPES scheme refers to the Implicit Pressure Explicit Mass (IMPEM) scheme which is considered for the solution of multi-phase multi-component flow in porous media, and it reduces to IMPES scheme when it is applied to incompressible and immiscible problems. For any IMPEM algorithm, the phases are handled symmetrically in order to preserve the mass conservation, however, it loosens the condition that the phase saturations sum to one. In our new algorithm, the property that the phase saturations sum to one and the property of mass conservation for both phases naturally hold true.

The rest of the paper is organized as follows. In Section \ref{section2}, we introduce the mathematical model for the incompressible and immiscible two-phase flow in porous media and the conventional HF-IMPES scheme. In Section \ref{section3}, we propose the new physics-preserving IMPES scheme, and a number of desired properties of the new scheme are presented and proved in Section \ref{section4}. We discuss the numerical implementation in Section \ref{section5} and verify the efficiency and robustness of the new IMPES scheme by some numerical results in Section \ref{section6}. We make conclusions and discuss future work in Section \ref{section7}.

\section{Mathematical model and HF-IMPES scheme}\label{section2}
We now introduce the basic mathematical model for incompressible and immiscible two-phase flow in porous media. Let the wetting phase and non-wetting phase be denoted by the subscripts $w$ and $n$ respectively. Based on the mass conservation law, Darcy's law, the saturations constraint and the capillary pressure, the formulations of two-phase flow with gravity in porous media $\Omega \subset \mathbb{R}^d (d=2,3)$ are given by
\begin{align}
& \phi \frac{\partial S_\alpha}{\partial t}  + \nabla \cdot \bfu_\alpha  = F_\alpha, \qquad\quad  \ \, \qquad {\rm in} \  \Omega,  \quad \alpha = w,n, \label{conservation_law_equ}\\
& \bfu_\alpha = -\frac{k_{r \alpha}}{\mu_\alpha} \bfK (\nabla p_\alpha + \rho_\alpha g \nabla z), \qquad {\rm in} \  \Omega, \quad \alpha = w,n, \label{Darcy_law_equ}\\
& S_n + S_w  = 1, \qquad\qquad\qquad \qquad\quad \  {\rm in} \   \Omega, \label{constraint_s_equ} 
\end{align}
\begin{align}
& p_c(S_w) = p_n - p_w, \qquad\qquad\qquad \ \  \ \,  {\rm in} \  \Omega. \label{pc_equ}
\end{align}
Here $\phi$ is the porosity of the medium, $\bfK$ denotes the absolute permeability tensor, $S_\alpha$, $\bfu_\alpha$, $p_\alpha$, $F_\alpha$ are the saturation, Darcy's velocity, pressure and the sink/source term of each phase $\alpha$, $p_c$ is the capillary pressure. In (\ref{Darcy_law_equ}), $\rho_\alpha$, $k_{r \alpha}$, $\mu_\alpha$ are the density, relative permeability and viscosity of phase $\alpha$, $g$ is the magnitude of the gravitational acceleration, $z$ is the depth. The phase mobility is defined by $\lambda_\alpha = \frac{k_{r \alpha}}{\mu_\alpha}$, and the total mobility is given by $\lambda_t = \lambda_w + \lambda_n$. The fractional flow functions are also defined as $f_w = \lambda_w/\lambda_t , f_n = \lambda_n / \lambda_t$. Let $\Gamma=\partial \Omega$ be composed of $\Gamma_D$ and $\Gamma_N$ such that $\Gamma  = \Gamma_D \cup \Gamma_N$ and $\Gamma_D \cap \Gamma_N = \emptyset$, where we denote by $\Gamma_D$ the Dirichlet part of boundary and $\Gamma_N$ the Neumann part of boundary. We also denote by $\Gamma = \Gamma_{\rm in} \cup \Gamma_{\rm out}$. Here $\Gamma_{\rm in} = \{ \bfx \in \Gamma : \, \bfu_t(\bfx) \cdot \bfn(\bfx) < 0   \}$ is the inflow boundary and $\Gamma_{\rm out} = \{ \bfx \in \Gamma : \, \bfu_t(\bfx) \cdot \bfn(\bfx) \geq 0   \}$ is the outflow boundary, where $\bfu_t = \bfu_n+\bfu_w$ is the total velocity and $\bfn$ is the unit outward normal to $\Gamma$. The initial and boundary conditions are imposed to the equations (\ref{conservation_law_equ}-\ref{pc_equ}) as follows:
\begin{align}
&S_\alpha = S_\alpha^0, \quad \qquad t = 0,\label{int_cond}\\
& p_\alpha = p_\alpha^B, \quad \qquad  {\rm on} \ \Gamma_D,\quad \alpha = w,n, \label{bd_cond1}\\
& \bfu_\alpha \cdot \bfn = g^N_\alpha, \quad \ \,  {\rm on} \ \Gamma_N, \quad \alpha = w,n, \label{bd_cond2}\\
& S_\alpha = S_\alpha^B, \quad\quad\quad {\rm on} \    \Gamma_{\rm in} ,\quad \alpha = w,n. \label{bd_cond3}
\end{align}
We further assume the absolute permeability tensor $\bfK$ is heterogeneous and symmetric positive and definite, the porosity $\phi$ is time-independent and uniformly bounded below and above, and there exist positive constants $\overline{\lambda}_w,\overline{\lambda}_n,\overline{\lambda}_t$ such that the mobilities satisfy $0\leq \lambda_w(S_w) \leq \overline{\lambda}_w$, $0\leq \lambda_n(S_w) \leq \overline{\lambda}_n$, $0< \lambda_t(S_w) \leq \overline{\lambda}_t$.

In the homogeneous porous media, the saturations are continuous and the standard IMPES scheme \cite{Sheldon1959,Stone1961} have been widely used to simulate the two-phase flow in porous media. The conservation law of total volume is directly obtained by the summation of the conservation laws of each phase and the constraint of the saturations of phases (\ref{constraint_s_equ}) as follows:
\[
\nabla \cdot \bfu_t = F_t,
\]
where $\bfu_t = \bfu_w + \bfu_n$ is the total Darcy velocity and $F_t = F_w+F_n$ is the total external mass flow rate. By the Darcy's law of each phase (\ref{Darcy_law_equ}) and the equation of capillary pressure (\ref{pc_equ}), one can rewrite the total Darcy velocity into
\begin{align*}
\bfu_t = -\lambda_t \bfK \nabla p_w - \lambda_n \bfK \nabla p_c - (\lambda_w \rho_w+\lambda_n \rho_n) \bfK g \nabla z.
\end{align*}
Then the conservation law of total volume can also be rewritten into
\begin{align}
- \nabla \cdot ( \lambda_t \bfK \nabla p_w) = F_t + \nabla \cdot (\lambda_n \bfK \nabla p_c) + \nabla \cdot ((\lambda_w \rho_w+\lambda_n \rho_n) \bfK g \nabla z):= {\rm RHS}_{pres}(S_w),\label{con_dec_pres}
\end{align}
which is known as the pressure equation. For given $S_w$, the pressure equation is linear with respect to the wetting phase pressure $p_w$. For the wetting phase, substituting $\bfu_w=f_w \bfu_t + \lambda_nf_w \bfK \nabla p_c-\lambda_nf_w(\rho_w-\rho_n)\bfK g \nabla z$ into the wetting phase conservation law and noting that $\nabla p_c = \frac{d p_c(S_w)}{d S_w} \nabla S_w$, we get the following equation:
\begin{align}
\phi \frac{\partial S_w}{\partial t} = F_w - \nabla \cdot(f_w \bfu_t) - \nabla \cdot \left(\lambda_n f_w \bfK\left(  \frac{d p_c}{d S_w} \nabla S_w - (\rho_w - \rho_n)g \nabla z\right) \right):= {\rm RHS}_{sat}(p_w,S_w), \label{con_dec_sa}
\end{align}
which is known as the saturation equation. Then for given $S^n_w$ from the time step $n$, the standard IMPES scheme implicitly solves $p^{n+1}_w$ by the pressure equation (\ref{con_dec_pres}) and explicitly updates the saturation $S^{n+1}_w$ of wetting phase by the saturation equation (\ref{con_dec_sa}) at the time step $n+1$ as follows:
\begin{align*}
- \nabla \cdot ( \lambda_t (S^n_w) \bfK \nabla p^{n+1}_w)  &= {\rm RHS}_{sat}(S^n_w),\\
\phi \frac{S^{n+1}_w - S^n_w}{t_{n+1}-t_n} &=  {\rm RHS}_{sat}(p^{n+1}_w,S^n_w).
\end{align*}

In the heterogeneous porous media, the capillary discontinuity may often arise from contrast in capillary pressure functions, and the saturation is discontinuous due to the continuity of the capillary pressure and the change of the capillary function across space. For different capillary pressure functions, Hoteit and Firoozabadi \cite{Hoteit2008_1,Hoteit2008_2} proposed a revised IMPES (HF-IMPES) scheme to treat contrast in capillary pressure in heterogeneous porous media. Define $\Phi_\alpha = p_\alpha + \rho_\alpha g z$, $\alpha = w,n$ and $\Phi_c =  \Phi_n - \Phi_w = p_c +(\rho_n-\rho_w)gz$. The total velocity $\bfu_t$ can also be defined as $\bfu_t = \bfu_a + \bfu_c$, where $\bfu_a = -\lambda_t \bfK \nabla \Phi_w$ and $\bfu_a = -\lambda_n \bfK \nabla \Phi_c$. The HF-IMPES scheme is briefly summarized as follows:

{\bf Step 1.} Given $S^n_w$, seek $\bfu^{n+1}_c$ such that
\[
\bfu^{n+1}_c  = -\lambda_n(S^n_w) \bfK \nabla
\Phi_c(S^n_w).
\]

{\bf Step 2.} Given $S^n_w$ and $\bfu^{n+1}_c$, seek $\bfu^{n+1}_a$ and $\Phi^{n+1}_w$ implicitly by\begin{align*}
\nabla \cdot \bfu^{n+1}_a = F_t - \nabla \cdot \bfu^{n+1}_c, \quad \bfu^{n+1}_a = - \lambda_t(S^n_w) \bfK \nabla \Phi^{n+1}_w.
\end{align*}

{\bf Step 3.} Given $S^n_w$, $\bfu^{n+1}_a$ and $\Phi^{n+1}_w$, explicitly update the wetting and non-wetting phase saturations by
\begin{align*}
\phi \frac{S^{n+1}_w - S^n_w}{t_{n+1}-t_n} = F_w - \nabla \cdot (f_w(S^n_w) \bfu^{n+1}_a), \quad S^{n+1}_n = 1 - S^{n+1}_w.
\end{align*}

We note that the HF-IMPES scheme is locally mass-conservative only for the wetting phase, but not for the non-wetting phase, just like the standard IMPES scheme. Besides, both of the standard IMPES and HF-IMPES schemes are biased with regard to the two phases and might produce a wetting-phase saturation which is larger than one. In order to overcome such disadvantages, we will introduce in the following section a new physics-preserving IMPES scheme to solve the two-phase flow problem (\ref{conservation_law_equ}-\ref{bd_cond3}) in heterogeneous porous media.

\section{A physics-preserving IMPES scheme}\label{section3}
In this section we present a new physics-preserving IMPES scheme for incompressible and immiscible two-phase flow problem (\ref{conservation_law_equ}-\ref{bd_cond3}) in heterogeneous porous media. We use the standard notations and definitions for Sobolev spaces (cf. \cite{Adams1975}) throughout the paper. For any $D \subset \overline{\Omega}$, the inner products for any scalar functions $\psi$ and $\phi$, and vector functions $\boldsymbol{\psi}$ and $\boldsymbol{\phi}$ are define by
\[
(\psi,\phi)_D = \int_D \psi \phi \, d  \bfx, \quad  (\boldsymbol{\psi} ,\boldsymbol{\phi})_D = \int_D\boldsymbol{\psi} \cdot \boldsymbol{\phi} \, d  \bfx.
\]
When $D=\Omega$, we use the notation $\|\cdot\|_{0,D}$ to denote the $L^2$-norm on $D$. We denote by $\Ct_h$ the quasi-uniform grid on $\Omega$, $\Ce_h$ the set of all faces ($d=3$) or edges ($d=2$) of $\Ct_h$, $h_K$ the diameter of any element $K\in \Ct_h$, $h=\min_{K\in \Ct_h}h_K$. Let $K, K^\prime \in \Ct_h$ and $F =\partial K \cap \partial K^\prime$ with the outward unit normal vector $\bfn_F$ exterior to $K$. We also denote by $\llbracket \psi \rrbracket = (\psi|_{K})|_F -  (\psi|_{K^\prime})|_F$ the jump of scalar function $\psi$ across interior edges/faces $F$, $\{  \psi\} = \frac{1}{2}((\psi|_{K})|_F+(\psi|_{K^\prime})|_F)$ the average of scalar function $\psi$ across interior edges/faces $F$. For edges/faces on $\partial \Omega$, $\llbracket \psi \rrbracket $ and $\{  \psi\} $ denote $\psi$. We denote by $C$ with or without subscript a positive constant depending on the shape regularity of the meshes and the coefficients data in (\ref{conservation_law_equ}-\ref{pc_equ}).

Next, we introduce the mixed finite element spaces which will be used for the spacial discrete schemes. On the
simplicial mesh $\Ct_h$, we consider the lowest-order of Raviart-Thomas finite element space
\[
RT_0(\Ct_h)  = \{ \bfv \in L^d(\Omega): \, \forall K \in \Ct_h, \bfv = \bfa + b \bfx, \bfa \in \mathbb{R}^d, b \in \mathbb{R},  \bfx \in K, \llbracket \bfv \cdot \bfn \rrbracket =0 \  on \ \forall  F \in \Ce_h \setminus \partial \Omega\}.
\]
We define $\bfU_h =RT_0(\Ct_h)$. Let $Q_h = \{ q_h \in L^2(\Omega): \, q_h|_K \in P_0(K), \ \forall K \in \Ct_h  \}$ be the piecewise constant space and $\bfU^0_h = \{ \bfv \in \bfU_h : \bfv \cdot \bfn = 0 \ {\rm on} \ \Gamma_N \}$. When the lowest-order Raviart-Thomas mixed finite element method is used for the discretization on structured grid, the mixed finite element method based on the trapezoidal rule for integration is equivalent to the cell-centered finite difference method on structured grid.

\subsection{A physics-preserving IMPES scheme (P-IMPES)}
Define $\boldsymbol{\xi}_\alpha = \lambda_t \bfw_\alpha$ with $\bfw_\alpha = -\bfK(\nabla p_\alpha + \rho_\alpha g \nabla z)$, $\alpha = n,w$. Then we easily have $\bfu_\alpha = f_\alpha \boldsymbol{\xi}_\alpha$, where $f_\alpha$ is the fractional flow function. Define $\bfu_t = \bfu_n+\bfu_w$ and $\boldsymbol{\xi}_c  = \boldsymbol{\xi}_n - \boldsymbol{\xi}_w$, we have 
\begin{align}
\bfu_w = f_w \bfu_t - f_w f_n \boldsymbol{\xi}_c , \quad \bfu_n = f_n \bfu_t + f_w f_n \boldsymbol{\xi}_c.\label{vel_wn_exp}
\end{align}
In order to clearly illustrate the derivation of the new algorithm, we rewrite the equations (\ref{conservation_law_equ})-(\ref{Darcy_law_equ}) into the following equations:
\begin{subequations}\label{new_formul_cons_darcy}
\begin{align}
&  \phi \frac{\partial S_w}{\partial t}   +   \nabla\cdot(f_w \bfu_t) =  F_w + \nabla\cdot( f_nf_w \boldsymbol{\xi}_c ) , \ \ \quad \ \qquad {\rm in} \  \Omega,\\
&   \phi \frac{\partial S_n}{\partial t}  +   \nabla\cdot(f_n \bfu_t)  =  F_n  -   \nabla\cdot( f_nf_w \boldsymbol{\xi}_c ),  \ \qquad\qquad {\rm in} \  \Omega,\\
&   (\lambda_t \bfK)^{-1} \bfu_t  =    (\lambda_t \bfK)^{-1} f_n \boldsymbol{\xi}_c  - \nabla p_w - \rho_w g \nabla z,  \  \  \qquad {\rm in} \  \Omega, \label{darcy_1} \\
&   (\lambda_t \bfK)^{-1} \bfu_t  =   - (\lambda_t \bfK)^{-1} f_w \boldsymbol{\xi}_c  - \nabla p_n - \rho_n g \nabla z,  \qquad {\rm in} \  \Omega. \label{darcy_2}
\end{align}
\end{subequations}
We note that the Darcy flows for both phases in (\ref{darcy_1}-\ref{darcy_2}) are rewritten based on the total velocity and an auxiliary velocity referring to as the capillary potential gradient. 

Next, we start to introduce a new physics-preserving IMPES scheme for the two-phase flow problem (\ref{new_formul_cons_darcy}), (\ref{constraint_s_equ}-\ref{bd_cond3}). We assume $\bfu_t,  \boldsymbol{\xi}_c \in H({\rm div},\Omega)$, $p_\alpha \in L^2(\Omega)$ and $S_\alpha \in L^2(\Omega)$, $\alpha=w,n$.

For any $\bfv \in \bfU^0_h$, $q\in Q_h$ and $S^h_w \in Q_h$,  we define a bilinear formulation $B_\alpha(\bfv, q;S^h_w)$ as follows:
\begin{align*}
B_\alpha(\bfv, q;S^h_w) = (\nabla \cdot ( f_\alpha(S^h_w) \bfv ),q)  - \sum_{K\in \calT_h} \int_{\partial K_\alpha^- \setminus \Gamma} \llbracket f_\alpha(S^h_w)\rrbracket \bfv \cdot \bfn  \, q,
\end{align*}
where $\partial K^-_\alpha = \{ e \subset \partial K : \{\bfu^h_\alpha \cdot \bfn_e\} |_e < 0  \}$ with the normal vector $n_e$ exterior to $K$. Here, $\bfu^h_\alpha$ is the discrete velocity of phase $\alpha$. Actually, this is an upwind scheme for $f_\alpha(S^h_w)$ on $\partial K$. Indeed, if $q \in Q_h$ is piecewise constant, we can compute $B_\alpha(\bfv, q;S^h_w)$ as follows:
\begin{align*}
B_\alpha(\bfv, q;S^h_w)  &= \sum_{K \in \calT_h} \int_{\partial K} f_\alpha(S^h_w) \bfv \cdot \bfn \, q - \sum_{K\in \calT_h} \int_{\partial K_\alpha^- \setminus \Gamma} \llbracket f_\alpha(S^h_w)\rrbracket \bfv \cdot \bfn  \, q  
\\
& =\sum_{K \in \calT_h} \int_{\partial K} f_\alpha(S^{\ast,h}_{w,\alpha}) \bfv \cdot \bfn \, q ,\nn
\end{align*}
where the upwind value $S_{w,\alpha}^{\ast,h}$ in the function $\lambda_\alpha(S^{\ast,h}_{w,\alpha})$ is defined as follows:
\begin{align*}
S_{\alpha}^{\ast,h}= \begin{cases}  S^{h}_\alpha|_{K_i},\quad {\rm if} \ \{\bfu^{h}_\alpha \cdot \bfn_\gamma\}_\gamma \geq 0, \\
                     S^{h}_\alpha|_{K_j},\quad {\rm if} \ \{ \bfu^{h}_\alpha \cdot \bfn_\gamma\}_\gamma < 0,
                      \end{cases}
                      \quad
S_{w,\alpha}^{\ast,h} =          \begin{cases}        S_{w}^{\ast,h}  , \quad \quad \ \ \alpha = w,\\
                                                      1  -    S_{n}^{\ast,h},\quad \alpha = n .
                                                  \end{cases}
\end{align*}
Here we denote $\gamma = \partial K_i \cap \partial K_j$ with the normal vector $\bfn_\gamma$ exterior to $K_i$. If $\gamma \subset \Gamma_{\rm in}$, then $S_{w,\alpha}^{\ast,h}|_\gamma = P_\gamma S^B_w|_\gamma$, where $P_\gamma$ is the $L^2$-projection operator into $P_0(\gamma)$. 

We also define $B_c (\bfv,q;S^h_w)$ by
\begin{align*}
B_c (\bfv,q;S^h_w) &= \left(  \nabla \cdot(f_n(S^h_w)  f_w(S^h_w)  \boldsymbol{\xi}_{c}),q  \right)  -  \sum_{K\in \calT_h} \int_{\partial K^-_w \cap K^-_n \setminus \Gamma} \llbracket f_n (S^h_w) f_w(S^h_w)\rrbracket \bfv \cdot \bfn  \, q\\
& -  \sum_{K\in \calT_h} \int_{\partial K^-_w\setminus  ( K^-_n \cup \Gamma)} \llbracket  f_w(S^h_w)\rrbracket f_n (S^{\ast,h}_{w,n})\bfv \cdot \bfn  \, q -  \sum_{K\in \calT_h} \int_{\partial K^-_n\setminus  ( K^-_w \cup \Gamma)} \llbracket  f_n(S^h_w)\rrbracket f_w (S^{\ast,h}_{w,w})\bfv \cdot \bfn  \, q.
\end{align*}
For any $q\in Q_h$, we have 
\[
B_c (\bfv,q;S^h_w) = \sum_{K\in \calT_h} \int_{\partial K} f_n(S^{\ast,h}_{w,n})f_w(S^{\ast,h}_{w,w})\bfv \cdot \bfn  \, q.
\]

Let $J = (0,T]$, we have the following continuous-in-time and discrete-in-space nonlinear system to solve the two-phase flow problem (\ref{new_formul_cons_darcy}), (\ref{constraint_s_equ}-\ref{bd_cond3}) in porous media. We denote $\sigma_w = 1$ and $\sigma_n=-1$. For any $\bfv \in \bfU^0_h$ and $q\in Q_h$, we find $\bfu^h_t (\cdot, t)\in \bfU_h$, $\boldsymbol{\xi}^h_{c}(\cdot, t) \in \bfU_h $, $p^h_\alpha(\cdot, t)  \in Q_h$, $S^h_\alpha(\cdot, t)  \in Q_h$, $\alpha = w,n$, such that
\begin{subequations}\label{dis_total_weak_form}
\begin{align}
& \left(\phi  \frac{\partial S^h_\alpha}{\partial t} , q\right) + B_\alpha(\bfu^h_t, q;S^h_w) = (F_\alpha,q) + \sigma_\alpha B_c (\boldsymbol{\xi}^h_{c},q;S^h_w) ,\qquad \qquad\qquad   \,   \, \ \alpha = w,n, \ t \in J, \label{dis_tolf_1} \\
& (( \lambda_t \bfK)^{-1} \bfu^h_t, \bfv) -(  p^h_w, \nabla \cdot \bfv )  =  (( \lambda_t \bfK)^{-1} f_n \boldsymbol{\xi}^h_{c},\bfv) - \int_{\Gamma_D} p^B_w  \bfv \cdot \bfn - (\rho_w g \nabla z,   \bfv), \qquad \  t \in J, \label{dis_tolf_2}\\
& (( \lambda_t \bfK)^{-1} \bfu^h_t, \bfv) -(  p^h_n, \nabla \cdot \bfv )  =- (( \lambda_t \bfK)^{-1} f_w \boldsymbol{\xi}^h_{c},\bfv) - \int_{\Gamma_D} p^B_n  \bfv \cdot \bfn - (\rho_n g \nabla z,   \bfv), \qquad  t \in J, \label{dis_tolf_3}\\
& (S^h_n+S^h_w,q) = (1,q),  \qquad \qquad  \  \ \ t \in J,  \\
& (p^h_n-p^h_w,q) = (p_c(S^h_w),q), \quad \quad \  \ t \in J,\\
& (S^h_w, q) = (S^0_w,q), \quad \qquad \qquad  \quad \  \, t = 0.
\end{align}
\end{subequations}

The equations (\ref{dis_tolf_1}-\ref{dis_tolf_3}) can be obtained through multiplying equations in (\ref{new_formul_cons_darcy}) by test functions, using integration by parts and applying to the space discrete approximation.
Let ${S}^{h,n}_w \in Q_h$ be given at the time step $n$, then we find $\bfu^{h,n+1}_{t} \in \bfU_h $, $\boldsymbol{\xi}^{h,n+1}_{c} \in \bfU_h $, $p^{h,n+1}_\alpha \in Q_h$, $S^{h,n+1}_\alpha  \in Q_h$, $\alpha = w,n$ at the next time step $n+1$ as follows:
\begin{subequations}\label{new_frame}
\begin{align}
& \left(\phi  \frac{  S^{h,n+1}_\alpha -S^{h,n}_\alpha }{ t_{n+1} - t_n} , q\right) + B_\alpha(\boldsymbol{u}^{h,n+1}_t, q;S^{h,n}_w) = (F_\alpha,q) + \sigma_\alpha B_c( \boldsymbol{\xi}^{h,n+1}_{c} ,q;S^{h,n}_w ), \qquad    \ \ \  \    \alpha = w,n,   \label{sch2_nonequa_1} \\
& (( \lambda_t \bfK)^{-1} \bfu^{h,n+1}_t, \bfv) -(  p^{h,n+1}_w, \nabla \cdot \bfv )  =  (( \lambda_t \bfK)^{-1} f_n(S^{h,n}_w) \boldsymbol{\xi}^{h,n+1}_{c},\bfv) - \int_{\Gamma_D} p^B_w  \bfv \cdot \bfn - (\rho_w g \nabla z,   \bfv),  \label{sch2_nonequa_2-1}  \\
& (( \lambda_t \bfK)^{-1} \bfu^{h,n+1}_t, \bfv) -(  p^{h,n+1}_n, \nabla \cdot \bfv )  =- (( \lambda_t \bfK)^{-1} f_w(S^{h,n}_w) \boldsymbol{\xi}^{h,n+1}_{c},\bfv) - \int_{\Gamma_D} p^B_n  \bfv \cdot \bfn - (\rho_n g \nabla z,   \bfv), \label{sch2_nonequa_2-2} \\
& (S^{h,n+1}_n+S^{h,n+1}_w,q) = (1,q),   \label{sch2_nonequa_3} \\
& (p^{h,n+1}_n-p^{h,n+1}_w,q) = (p_c(S^{h,n}_w),q).  \label{sch2_nonequa_4}
\end{align}
\end{subequations}

\begin{remark}
The key point of developing the P-IMPES scheme lies in solution of the above linear system (\ref{new_frame}). In the following, 
we aim to solve (\ref{new_frame}) which can be reformulated into an equivalent decoupled system. We also remark that in (\ref{new_frame}), the mass conservation equations for both phases are solved in (\ref{sch2_nonequa_1}). This is the reason why the algorithm retains the mass conservation for each of the two phases.
\end{remark}

By summing the above discrete conservation law (\ref{sch2_nonequa_1}) for each phase and noting the constraint of the saturations of phases (\ref{sch2_nonequa_3}) and the fact that
$
 \sum_{\alpha} \sigma_\alpha B_c( \boldsymbol{\xi}^{h,n+1}_{c} ,q;S^{h,n}_w )=0,
$
we now solve the following linear system to seek $\boldsymbol{u}^{h,n+1}_{t} \in \bfU_h $, $\boldsymbol{\xi}^{h,n+1}_{c} \in \bfU_h $, $p^{h,n+1}_\alpha \in Q_h, \alpha = n,w$, such that
\begin{subequations}
\begin{align}
& \sum_{\alpha} B_\alpha(\boldsymbol{u}^{h,n+1}_t, q;{S}^{h,n}_w)  = (F_t, q) ,  \label{tol_vel_equ_sch2_1}\\
& (( \lambda_t \bfK)^{-1} \bfu^{h,n+1}_t, \bfv) -(  p^{h,n+1}_w, \nabla \cdot \bfv )  =  (( \lambda_t \bfK)^{-1} f_n(S^{h,n}_w) \boldsymbol{\xi}^{h,n+1}_{c},\bfv) - \int_{\Gamma_D} p^B_w  \bfv \cdot \bfn - (\rho_w g \nabla z,   \bfv),  \label{sch2_nonequa_2-1-s}  \\
& (( \lambda_t \bfK)^{-1} \bfu^{h,n+1}_t, \bfv) -(  p^{h,n+1}_n, \nabla \cdot \bfv )  =- (( \lambda_t \bfK)^{-1} f_w(S^{h,n}_w) \boldsymbol{\xi}^{h,n+1}_{c},\bfv) - \int_{\Gamma_D} p^B_n  \bfv \cdot \bfn - (\rho_n g \nabla z,   \bfv), \label{sch2_nonequa_2-2-s} \\
& (p^{h,n+1}_n-p^{h,n+1}_w,q) = (p_c( {S}^{h,n}_w),q).  \label{tol_vel_equ_sch2_3}
\end{align}
Note that $f_n(S^{h,n}_w) + f_w(S^{h,n}_w) =1$ and $\nabla \cdot \bfv \in Q_h$,  then by (\ref{sch2_nonequa_2-1-s},\ref{sch2_nonequa_2-2-s},\ref{tol_vel_equ_sch2_3}) we further have 
\begin{align}
\left(( \lambda_t \bfK)^{-1} \boldsymbol{\xi}^{h,n+1}_c , \bfv \right)
=\left(p_c( {S}^{h,n}_w),\nabla \cdot \bfv\right)  -\int_{\Gamma_D}
(p^B_n - p^B_w)  \bfv \cdot \bfn - \left((\rho_n- \rho_w ) g \nabla
z, \bfv\right).\label{pre_vel_equ_sch2}
\end{align}
\end{subequations}

It is easy to see that the above equation (\ref{pre_vel_equ_sch2}) is well-posed for the
solution of $\boldsymbol{\xi}^{h,n+1}_c$. Then the solution of
the linear system (\ref{tol_vel_equ_sch2_1}-\ref{tol_vel_equ_sch2_3}) can be
decoupled into two steps. Firstly, we can solve (\ref{tol_vel_equ_sch2_3}) to get $p^{h,n+1}_{nw} = p^{h,n+1}_n-p^{h,n+1}_w$, and solve (\ref{pre_vel_equ_sch2}) to get $\boldsymbol{\xi}^{h,n+1}_c$. Next, $p^{h,n+1}_w$ and
$\bfu^{h,n+1}_t$ can be obtained by (\ref{tol_vel_equ_sch2_1}) and
(\ref{sch2_nonequa_2-1-s}), then $p^{h,n+1}_n$ is
directly obtained. Now we present the new physics-preserving IMPES scheme.

\smallskip

\begin{center}
\fbox{
\parbox{16cm}{
\begin{algorithm}\label{algorithm_main}
(P-IMPES scheme) Given $S^{h,n}_w$ at the time step $n$, we seek the solutions of the linear system (\ref{sch2_nonequa_1}-\ref{sch2_nonequa_4}) at the time step $n+1$ as follows:

\smallskip

\noindent {Step 1.} Seek $p^{h,n+1}_{nw} =p^{h,n+1}_n-p^{h,n+1}_w \in
Q_h$ and $\boldsymbol{\xi}^{h,n+1}_c \in \bfU_h$ by
(\ref{tol_vel_equ_sch2_3}) and (\ref{pre_vel_equ_sch2}) respectively.

\noindent {Step 2.} Seek $p^{h,n+1}_w$ and
$\boldsymbol{u}^{h,n+1}_t$ by (\ref{tol_vel_equ_sch2_1}) and (\ref{sch2_nonequa_2-1-s}). Then $p^{h,n+1}_n$ can be updated by 
\[
p^{h,n+1}_n = p^{h,n+1}_{nw} + p^{h,n+1}_w.
\]

\noindent {Step 3.}  Update the wetting phase saturation $S^{h,n+1}_w$ by (\ref{sch2_nonequa_1}) with $\alpha = w$. Then the non-wetting phase saturation $S^{h,n+1}_n$ is piecewisely updated by $S^{h,n+1}_n = 1 -S^{h,n+1}_w $. 

\end{algorithm}
}
}
\end{center}

\smallskip

We remark that the pressure-velocity system (\ref{tol_vel_equ_sch2_1}) and (\ref{sch2_nonequa_2-1-s}) in the Step 2 of Algorithm \ref{algorithm_main} is well-posed based on the theory of mixed finite element methods \cite{Babuska1980,BF1991}. Since we directly solve the total velocity by mixed finite element method in Step 2 of Algorithm \ref{algorithm_main}, the total velocity is inherently continuous in its normal direction in this algorithm.

\section{Analysis of the P-IMPES scheme}\label{section4}
In this section we focus on the analysis of a number of desired properties of the new P-IMPES scheme for the fully mass-conservative property, the unbiased property of the solution and the bounds-preserving property of saturation for both phases.

\subsection{Local mass conservation for both phases} 
For the P-IMPES scheme, the approximate saturations of both phases satisfy the discrete mass-conservative law. Actually, for any $K \in \Ct_h$, taking $q = 1$ in $\overline{K}$ and $q = 0$ in $\Omega\setminus \overline{K}$ in (\ref{sch2_nonequa_1}), we have the following local mass-conservative property for both phases.

\begin{lemma}
For any $K \in \Ct_h$, the approximate saturations of both phases based on the P-IMPES scheme satisfy the  local mass-conservative property on $K$ as follows:
\begin{align*}
\int_{K} \phi  \frac{  S^{h,n+1}_\alpha -S^{h,n}_\alpha }{ t_{n+1} - t_n}  + \int_{\partial K} f_\alpha(S^{\ast,h,n}_{w,\alpha}) \bfu^{h,n+1}_t \cdot \bfn  = \int_{K}F_\alpha + \sigma_\alpha \int_{\partial K} f_n(S^{\ast,h,n}_{w,n})f_w(S^{\ast,h,n}_{w,w}) \boldsymbol{\xi}^{h,n+1}_c  \cdot \bfn,\quad   \alpha = w,n,
\end{align*}
where $S^{\ast,h,n}_{w,\alpha}$ is the upwind value of $S^{\ast,h}_{w,\alpha}$ on each edge/face of $\partial K$ at the time step $n$.

\end{lemma}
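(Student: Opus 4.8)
The plan is to obtain the identity by a single judicious choice of test function in the discrete conservation law (\ref{sch2_nonequa_1}), exploiting the reductions of $B_\alpha$ and $B_c$ to boundary integrals that were already recorded in the excerpt for piecewise-constant $q$. Concretely, for a fixed $K \in \Ct_h$ I would take $q = \chi_K$, the characteristic function of $K$ (i.e. $q=1$ on $\overline{K}$ and $q=0$ on $\Omega \setminus \overline{K}$), which lies in $Q_h$. With this choice the accumulation inner product collapses to $\int_K \phi\,(S^{h,n+1}_\alpha - S^{h,n}_\alpha)/(t_{n+1}-t_n)$ and the source term to $\int_K F_\alpha$; and because $q$ vanishes off $K$, the element sums defining $B_\alpha(\bfu^{h,n+1}_t, q; S^{h,n}_w)$ and $B_c(\boldsymbol{\xi}^{h,n+1}_c, q; S^{h,n}_w)$ retain only the single face integral over $\partial K$, yielding exactly $\int_{\partial K} f_\alpha(S^{\ast,h,n}_{w,\alpha})\, \bfu^{h,n+1}_t \cdot \bfn$ and $\int_{\partial K} f_n(S^{\ast,h,n}_{w,n}) f_w(S^{\ast,h,n}_{w,w})\, \boldsymbol{\xi}^{h,n+1}_c \cdot \bfn$ respectively. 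Collecting these four pieces reproduces the claimed local balance.

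The only genuine point to settle is that (\ref{sch2_nonequa_1}) holds for \emph{both} phases, not merely for the wetting phase that Step 3 of Algorithm \ref{algorithm_main} updates explicitly. The $\alpha = w$ form is solved directly, so I would recover the $\alpha = n$ form by combining it with the total equation (\ref{tol_vel_equ_sch2_1}), which the scheme also enforces. Subtracting the $\alpha = w$ instance of (\ref{sch2_nonequa_1}) from (\ref{tol_vel_equ_sch2_1}), and using $B_w + B_n$ on the left together with $F_t = F_w + F_n$ on the right, isolates the non-wetting accumulation term; here one invokes the saturation constraint (\ref{sch2_nonequa_3}) at both time levels, so that $S^{h,n+1}_n - S^{h,n}_n = -(S^{h,n+1}_w - S^{h,n}_w)$ and the surviving accumulation equals $(\phi\,(S^{h,n+1}_n - S^{h,n}_n)/(t_{n+1}-t_n),q)$, while the identity $\sigma_w = -\sigma_n$ converts the displaced capillary term $-\sigma_w B_c$ into $\sigma_n B_c$ (equivalently, the cancellation $\sum_\alpha \sigma_\alpha B_c(\boldsymbol{\xi}^{h,n+1}_c, q; S^{h,n}_w) = 0$ already noted in the text). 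What remains is precisely the $\alpha = n$ instance of (\ref{sch2_nonequa_1}), so both phase equations are valid for the computed solution.

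I expect this last step --- the consistency of the non-wetting-phase conservation law, which is never solved on its own --- to be the conceptual heart of the argument, even though it is algebraically short; this is exactly the mechanism referred to in the Remark following (\ref{new_frame}). Once both phase equations are in hand, the remainder is pure bookkeeping: substituting $q = \chi_K$ and reading off the surviving boundary integrals via the piecewise-constant formulas for $B_\alpha$ and $B_c$. No regularity or well-posedness input beyond what Section \ref{section3} supplies is needed, so I would not anticipate any analytic difficulty.
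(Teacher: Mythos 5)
Your proof is correct and follows essentially the same route as the paper: the paper's own proof consists precisely of taking $q=1$ on $\overline{K}$ and $q=0$ on $\Omega\setminus\overline{K}$ in (\ref{sch2_nonequa_1}) and reading off the boundary-integral reductions of $B_\alpha$ and $B_c$ for piecewise-constant test functions. Your second paragraph, recovering the $\alpha=n$ instance of (\ref{sch2_nonequa_1}) from the $\alpha=w$ instance, the total equation (\ref{tol_vel_equ_sch2_1}), and the saturation constraint at both time levels, is the same argument the paper itself gives in its subsection on the unbiased property of the solution, so it is a faithful filling-in of a detail the paper defers rather than a different approach.
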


\subsection{Unbiased property of the solution}
After we get the solutions for $\bfu^{h,n+1}_t$ and $\boldsymbol{\xi}^{h,n+1}_c$, we can update the wetting and non-wetting velocities $\bfu^{h,n+1}_w$ and $\bfu^{h,n+1}_n$ on each element by (\ref{vel_wn_exp}) as follows:
\begin{subequations}\label{update_uw_un}
\begin{align}
\bfu^{h,n+1}_w &= f_w(S^{h,n}_w) \bfu^{h,n+1}_t - f_w(S^{h,n}_w)  f_n(S^{h,n}_w)  \boldsymbol{\xi}^{h,n+1}_c ,\\  
 \bfu^{h,n+1}_n &= f_n(S^{h,n}_w)  \bfu^{h,n+1}_t + f_w(S^{h,n}_w)  f_n(S^{h,n}_w)  \boldsymbol{\xi}^{h,n+1}_c.
\end{align}
\end{subequations}
We note that if (\ref{sch2_nonequa_2-1-s}) and (\ref{pre_vel_equ_sch2}) hold, we directly have (\ref{sch2_nonequa_2-2-s}). Thus, we can also obtain $p^{h,n+1}_n$ and
$\boldsymbol{u}^{h,n+1}_t$ in Step 2 of Algorithm \ref{algorithm_main}, and then update $p^{h,n+1}_w$ by $p^{h,n+1}_w = p^{h,n+1}_n-(p^{h,n+1}_n-p^{h,n+1}_w)$. This indicates that the proposed P-IMPES scheme is unbiased in the solution of the phase pressures $p_n$ and $p_w$.

For the update of saturation in Step 3 of Algorithm \ref{algorithm_main}, we can also utilize another approach for Step 3 by updating the wetting and non-wetting phase saturations $S^{h,n+1}_w,S^{h,n+1}_n$ by (\ref{sch2_nonequa_1}) with $\alpha = w,n$ respectively. We claim that the two approaches are equivalent in the solution of saturations. Actually, on one hand, if we choose the second approach in Step 3 of the Algorithm \ref{algorithm_main}, then (\ref{sch2_nonequa_1}) holds true for $\alpha = n,w$. Due to the fact that (\ref{tol_vel_equ_sch2_1}) has been used in the computation, we have 
\[
 \left(\phi  \frac{  S^{h,n+1}_w -S^{h,n}_w }{ t_{n+1} - t_n} , q\right)  +   \left(\phi  \frac{  S^{h,n+1}_n -S^{h,n}_n }{ t_{n+1} - t_n} , q\right) =0,\quad q \in Q_h.
\] 
By the above equality and the assumption $S^{h,n}_w + S^{h,n}_n=1$, we directly have
\[
S^{h,n+1}_w + S^{h,n+1}_n=1.
\]
On the other hand, if we choose the first approach as the Step 3  shown in Algorithm \ref{algorithm_main}, by (\ref{sch2_nonequa_1}) with $\alpha=w$ and the facts $S^{h,k}_w + S^{h,k}_n=1,k=n,n+1$ and (\ref{tol_vel_equ_sch2_1}), we observe that (\ref{sch2_nonequa_1}) holds true for $\alpha=n$. Thus, these two approaches of update for the wetting and non-wetting saturations are equivalent. This also indicates that the proposed P-IMPES scheme is unbiased in the solution of the saturations $S_n$ and $S_w$. Since the first approach is simpler than another one, we use it in the update of saturations in Algorithm \ref{algorithm_main}.

\begin{remark}
The solutions of the wetting phase saturation in standard IMPES and HF-IMPES schemes are similar to the approach in Step 3 of the P-IMPES scheme. Since the mass-conservation property only holds true for one phase in standard IMPES and HF-IMPES schemes, the two approaches mentioned above for the update of saturations in standard IMPES and HF-IMPES schemes are not always equivalent. However, from the above discussion we can see that the two approaches for the update of saturations in the P-IMPES scheme are equivalent and the mass-conservative property holds true for both phases.
\end{remark}

\subsection{Bounds-preserving property of saturation for both phases}\label{subsect4_2}
In this section we will show that the approximate saturation of each phase from the P-IMPES scheme can be bounds-preserving if the time step size is smaller than a certain value. We propose two reasonable assumptions. On one hand, we assume that there exists a positive constant $\gamma_\alpha$ such that
\begin{align}
f_\alpha(S_\alpha) \leq \gamma_\alpha S_\alpha,\quad \alpha = w,n. \label{assum_1}
\end{align}
On the other hand, if $F_\alpha$ in (\ref{conservation_law_equ}) is a sink term, i.e., $F_\alpha\leq 0$, we assume that there exist two positive constants $\beta_1$ and $\beta_2$ such that
\begin{align}
\beta_1 S_\alpha \leq |F_\alpha| \leq \beta_2 S_\alpha.\label{assum_2}
\end{align}

In fact, the above two assumptions hold true reasonably. For instance, when the Brooks-Corey model \cite{BC1964} is applied for the two-phase flow in porous media, the phase mobilities are nonlinear functions and defined as 
$
\lambda_w(S_w) = \frac{1}{\mu_w}{\overline{S}_w^{\frac{2+3\theta}{\theta}}},\quad \lambda_n(S_w)= \frac{1}{\mu_n}\left( 1-\overline{S}_w \right)^2\left( 1- \overline{S}_w^{\frac{2+\theta}{\theta}} \right),
$
where $\theta$ is the parameter associated with pore size distribution, $\overline{S}_w$ is the effective saturation defined as $(S_w -S_{rw})/(1- S_{rn} - S_{rw} )$. Here, $S_{r\alpha}, \alpha = n,w,$ is residual phase saturation, namely, $S_\alpha \geq S_{r\alpha} > 0$. Let $\eta =\frac{2+3\theta}{\theta} >1$, then we can easily derive that
$
\lambda_w(S_w) \leq\frac{1}{\mu_w} (2S_w)^{\eta} \leq \frac{2^\eta}{\mu_w} S_w.
$
For the total mobility $\lambda_t > 0$, we can assume that $\lambda_t \geq \lambda_0 >0$. Thus we can choose $\gamma_w = \frac{2^\eta}{\mu_w \lambda_0} $ in (\ref{assum_1}). Similarly, we can also assume there exists a positive constant $\gamma_n$ such that $f_n(S_n) \leq \gamma_n S_n$. For the estimate (\ref{assum_2}) in the second assumption, this is also reasonable since the sink term $F_\alpha$ comes from the mass flow rate.

\begin{lemma}\label{timestep_lemma}
Let $\delta t =  t_{n+1}-t_n$. We assume the estimates (\ref{assum_1}) and (\ref{assum_2}) hold true. For the approximate wetting phase saturation in the P-IMPES scheme, we assume $S^{h,n}_\alpha \in (0,1)$ with tolerance saturation $S_{t\alpha}$, namely, $S^{h,n}_\alpha \geq S_{t\alpha}$, $\alpha=w,n$. For any $K \in \Ct_h$, if $\frac{\delta t}{h} $ is sufficiently small, we have 
\[
S^{h,n+1}_\alpha \in (0,1),\quad \alpha=w,n.
\]
\end{lemma}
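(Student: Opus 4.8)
The plan is to reduce both inclusions to a single strict lower bound. Since the scheme enforces $S^{h,n+1}_w + S^{h,n+1}_n = 1$ on each element (by (\ref{sch2_nonequa_3}) and Step~3 of Algorithm~\ref{algorithm_main}), on every $K \in \Ct_h$ one has $S^{h,n+1}_w < 1$ exactly when $S^{h,n+1}_n > 0$, and symmetrically. Hence it suffices to prove $S^{h,n+1}_\alpha > 0$ for both $\alpha = w,n$; the two upper bounds then follow for free, and the whole argument can concentrate on the lower bound.

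First I would fix $K$, write $S^{n}_{\alpha,K}$ for the constant value of $S^{h,n}_\alpha$ on $K$ and set $\phi_K = |K|^{-1}\int_K \phi\,d\bfx$. Taking $q=1$ on $K$ and $q=0$ elsewhere in the local mass-conservative identity of the preceding lemma, and combining the total-velocity and capillary flux terms, gives
\[
\phi_K |K|\,(S^{n+1}_{\alpha,K} - S^{n}_{\alpha,K}) = \delta t\Big(\int_K F_\alpha\,d\bfx - \int_{\partial K} f_\alpha(S^{\ast,h,n}_{w,\alpha})\,G_\alpha\Big),
\]
where $G_\alpha = \bfu^{h,n+1}_t\cdot\bfn - \sigma_\alpha f_{\bar\alpha}(S^{\ast,h,n}_{w,\bar\alpha})\,\boldsymbol{\xi}^{h,n+1}_c\cdot\bfn$ is obtained by factoring the common upwind factor $f_\alpha$ out of both flux contributions, with $\bar\alpha$ the complementary phase. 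I would then split $\partial K$ according to the upwind rule: on the outflow faces the coefficient is the local value $f_\alpha(S^n_{\alpha,K})$ with $G_\alpha \geq 0$, while on the inflow faces it is a neighbouring value $f_\alpha(S^n_{\alpha,K'}) \geq 0$ with $G_\alpha \leq 0$. The inflow contribution therefore carries a sign that only raises $S^{n+1}_{\alpha,K}$ and may be discarded for the lower bound, leaving the self-outflow $f_\alpha(S^n_{\alpha,K})\sum_{\mathrm{out}}\int_e G_\alpha$ as the only term to control.

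The heart of the estimate is to bound this self-outflow together with the source/sink term. I would use assumption (\ref{assum_1}) to replace $f_\alpha(S^n_{\alpha,K})$ by $\gamma_\alpha S^n_{\alpha,K}$; when $F_\alpha \geq 0$ the source only helps, whereas for a sink assumption (\ref{assum_2}) yields $\int_K F_\alpha\,d\bfx \geq -\beta_2 S^n_{\alpha,K}|K|$. Factoring out $S^n_{\alpha,K}$ then gives
\[
S^{n+1}_{\alpha,K} \geq S^n_{\alpha,K}\Big(1 - \beta_2\phi_K^{-1}\delta t - \gamma_\alpha\phi_K^{-1}|K|^{-1}\,\delta t\sum_{e\subset\partial K}\int_e |G_\alpha|\Big).
\]
Using the uniform lower bound on $\phi$, the face flux estimate $\int_e|G_\alpha| = O(h^{d-1})$ and quasi-uniformity $|e|/|K| = O(h^{-1})$, the parenthesis becomes $1 - C\,\delta t/h - C'\delta t$. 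Choosing $\delta t/h$ small enough makes it positive, and since $S^n_{\alpha,K} \geq S_{t\alpha} > 0$ by hypothesis, this delivers $S^{n+1}_{\alpha,K} > 0$ for both phases, which together with the reduction of the first paragraph completes the proof.

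The step I expect to be the main obstacle is making $C, C'$ genuinely independent of $h$, which hinges on an a priori bound for the discrete fluxes $\bfu^{h,n+1}_t\cdot\bfn$ and $\boldsymbol{\xi}^{h,n+1}_c\cdot\bfn$ in terms of the data alone, so that $\int_e|G_\alpha| = O(h^{d-1})$ really holds; this stability should follow from the well-posedness of the decoupled mixed finite element solves in Steps~1--2 of Algorithm~\ref{algorithm_main} and the uniform bounds on $\lambda_t$, $\bfK$, $\phi$ and the boundary/capillary data, and it is precisely this bound that fixes the threshold hidden in ``$\delta t/h$ sufficiently small.'' A secondary technical point is the sign bookkeeping inside $B_c$, where the inner factor $f_{\bar\alpha}$ is upwinded with respect to a different phase velocity than the outer $f_\alpha$: I would verify that on each face $G_\alpha$ shares the sign of the quantity $\{\bfu^{h}_\alpha\cdot\bfn\}$ governing the upwind choice, since this is exactly what makes the inflow term sign-favourable in the second paragraph.
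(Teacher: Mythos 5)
Your strategy coincides with the paper's: reduce everything to strict lower bounds via $S^{h,n+1}_w+S^{h,n+1}_n=1$, write the local mass balance with a combined flux (your $G_\alpha$ is exactly the paper's $\bfw^{h,n+1}_\alpha\cdot\bfn$), discard the sign-favourable inflow contribution, bound the outflow term through (\ref{assum_1}), handle a sink through (\ref{assum_2}), and factor out $S^{h,n}_\alpha$. The gap is in the step you dismiss as secondary bookkeeping: you assume that on every face where $G_\alpha\geq 0$ the upwind coefficient is the cell's own value $f_\alpha(S^{h,n}_\alpha|_K)$. That alignment is false in general. The upwind direction cannot be read off from the sign of $G_\alpha$ at time $t_{n+1}$: the upwinded forms $B_\alpha$ and $B_c$ enter the very linear system (\ref{tol_vel_equ_sch2_1}) that produces $\bfu^{h,n+1}_t$, so the upwind choice must be frozen using the phase velocities available from step $n$ (cf.\ the remark after Algorithm \ref{algorithm_main0}). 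A face can therefore be outflow for the new flux $G_\alpha$ while having been inflow for the old phase velocity that fixed the upwinding; on such a face the coefficient is the neighbour's $f_\alpha(S^{h,n}_\alpha|_{K'})$, which cannot be bounded by $\gamma_\alpha S^{h,n}_\alpha|_K$, and your factoring of $S^{h,n}_\alpha|_K$ collapses. Your proposed remedy, namely verifying that $G_\alpha$ shares the sign of $\{\bfu^{h}_\alpha\cdot\bfn\}$, cannot succeed for the same reason: it is not a verification but a claim that fails whenever the flow reverses direction between time steps (and, additionally, the upwind rule uses the average of the elementwise phase velocities, whose sign need not match that of $G_\alpha$ when $f_n$ differs across the face).

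This is precisely where the paper does extra work. It splits the outflow set $\widetilde{\partial K}^{+}_{w,n+1}$ into faces that also lie in $\partial K^{+}_{w,n}$, where the upwind value is the local one and (\ref{assum_1}) applies, and faces where the flow has reversed direction between the two time steps, and on the latter it introduces an additional hypothesis, $|\bfw^{h,n+1}_w\cdot\bfn|\leq\epsilon$, to absorb the neighbour-valued term. Some such extra assumption (or a substitute, e.g.\ crudely bounding $f_\alpha\leq 1$ on the reversal faces, at the price of a CFL threshold that degrades as $S_{t\alpha}\to 0$) is unavoidable in your argument, and it is missing. By contrast, the obstacle you flag as the main one --- a data-independent a priori bound on the discrete fluxes giving $\int_e|G_\alpha|=O(h^{d-1})$ --- is not actually needed for the lemma as stated: the paper's threshold (see the remark following the lemma) is expressed in terms of the computed fluxes $\bfw^{h,n+1}_\alpha\cdot\bfn$, which are finite on a fixed mesh, so ``$\delta t/h$ sufficiently small'' is meaningful without any stability analysis of the mixed solves.
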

\begin{proof}
We first analyze the upper bound of the wetting phase saturation. Let $\bfw^{h,n+1}_w \in \bfU_h$ be defined as $\bfw^{h,n+1}_w\cdot \bfn |_{F} = \left(\bfu^{h,n+1}_t \cdot \bfn - f_n(S^{\ast,h,n}_{w,n})  \boldsymbol{\xi}^{h,n+1}_c  \cdot \bfn  \right)|_F$ on any $F\in \Ce_h$. For any $K\in \Ct_h$, let $q = 1$ in $\overline{K}$ and $q = 0$ in $\Omega\setminus \overline{K}$ in (\ref{sch2_nonequa_1}), we have
\begin{align*}
\phi \frac{S^{h,n+1}_w-S^{h,n}_w}{\delta t}|K|  =  - \left(\int_{\widetilde{\partial K}^{-}_{w,n+1}} + \int_{\widetilde{\partial K}^{+}_{w,n+1}} \right) f_w(S^{\ast,h,n}_{w,w}) \bfw^{h,n+1}_w\cdot \bfn + \overline{F}_w|K| ,  
\end{align*}
where $\widetilde{\partial K}^{-}_{w,n+1} =  \{ F \subset \partial K : \ \bfw^{h,n+1}_w\cdot \bfn < 0  \} $ and $\widetilde{\partial K}^{+}_{w,n+1} =  \{ F \subset \partial K : \ \bfw^{h,n+1}_w\cdot \bfn \geq 0  \} $ are the inflow and outflow edges of $\bfw^{h,n+1}_w$ on $\partial K$, and $\overline{F}_w$ is the mean value of $F_w$ over $K$. By the shape regularity of the mesh, there holds
\begin{align}
\phi S^{h,n+1}_w = \phi S^{h,n}_w  -\left( \sum_{F \in \widetilde{\partial K}^{+}_{w,n+1}} + \sum_{F \in \widetilde{\partial K}^{-}_{w,n+1}} \right)\frac{\delta t}{C_Fh_K}f_w(S^{\ast,h,n}_{w,w})\bfw^{h,n+1}_w\cdot \bfn +\delta t \overline{F}_w \quad {\rm on} \  K, \label{lemm4_4_de1}
\end{align}
where $C_F>0$ depends on the shape regularity of the mesh. We have $\overline{F}_w \geq 0$ if there is injection source or no source on $K$. Otherwise, $\overline{F}_w< 0$ is production term, and by (\ref{assum_2}), we have $\overline{F}_w \geq -\beta_2 S^{h,n+1}_w$. 

By the assumption (\ref{assum_1}), we derive that
\begin{align*}
&\sum_{F \in \widetilde{\partial K}^{+}_{w,n+1}} \frac{\delta t}{C_Fh_K}f_w(S^{\ast,h,n}_{w,w})\bfw^{h,n+1}_w\cdot \bfn \\
& =\left( \sum_{F \in \widetilde{\partial K}^{+}_{w,n+1} \cap \partial K^+_{w,n}}  +  \sum_{F \in \widetilde{\partial K}^{+}_{w,n+1} \setminus \partial K^+_{w,n}}  \right)\frac{\delta t}{C_Fh_K}f_w(S^{\ast,h,n}_{w,w})\bfw^{h,n+1}_w\cdot \bfn\\
& \leq  \sum_{F \in \widetilde{\partial K}^{+}_{w,n+1} }  \frac{\delta t \gamma_w}{C_Fh_K} S^{h,n}_w \bfw^{h,n+1}_w\cdot \bfn +   \sum_{F \in \widetilde{\partial K}^{+}_{w,n+1} \setminus \partial K^+_{w,n}} \frac{\delta t \gamma_w}{C_Fh_K}\left|S^{h,n}_w|_{K'} -S^{h,n}_w \right| \bfw^{h,n+1}_w\cdot \bfn ,
\end{align*}
where $K'$ is the neighboring element of $K$ and $\partial K^+_{w,n}$ is the outflow edges of $\bfw^{h,n}_w$ on $\partial K$. For $F \in \widetilde{\partial K}^{+}_{w,n+1} \setminus \partial K^+_{w,n}$, the phase velocity changes the direction on $F$ between the two time steps, then we can assume there exists a positive constant $\epsilon$ which is small enough such that $\left| \bfw^{h,n+1}_w\cdot \bfn \right| \leq \epsilon$ on $F$. Combining the above estimates and noting the tolerance saturation assumption for $S^{h,n}_{w}$, we observe that when $\frac{\delta t}{h} $ is small enough, we have $S^{h,n+1}_w \geq  \eta S_{tw}> 0$ with $0<\eta<1$ on $K$.

Then $S^{h,n+1}_n > 0$ can also be similarly derived. Noting that $S^{h,n+1}_w + S^{h,n+1}_n=1$, then we can easily combine this saturation constraint with the low bounds of $S^{h,n+1}_\alpha $ to obtain $S^{h,n+1}_\alpha< 1, \alpha = n,w$. Now we complete the proof.
\end{proof}

\begin{remark}
By the derivation in Lemma \ref{timestep_lemma}, we can obtain the stability limit of the time step size as follows:
\[
\max\{ E_w,E_n \} \delta t / h< \phi,
\]
where $E_\alpha = \max_{K\in \Ct_h}\sum_{F \in \widetilde{\partial K}^{+}_{\alpha,n+1} }  \frac{  \gamma_\alpha}{C_F}  \bfw^{h,n+1}_\alpha\cdot \bfn  + C\epsilon, \alpha=w,n$. In the numerical experiments, we can relax this stability limit of the time step size to the  Courant-Friedrichs-Lewy (CFL) condition, and the CFL number is defined as $ \|\bfu_t\|_{L^\infty(0,T;L^\infty(\Omega))} \delta t / h$. 
\end{remark}

\section{Numerical implementation}\label{section5}
In this section, we aim to describe the P-IMPES scheme in Algorithm \ref{algorithm_main} in the matrix-vector formulation which is useful for implementation. For any shape-regular structured/unstructured mesh $\Ct_h$, we denote by $N,M$ the number of edges (2D) or faces (3D) and the number of elements in $\Ct_h$ respectively. 

Since $Q_h$ is piecewise constant space, for the basis function $q_j \in Q_h$, one can simply use $q_j|_K = 1$ and $q_j |_{\Omega\setminus  {K}}= 0$ for any $K \in \Ct_h$. For the basis function $\boldsymbol{\phi}_i \in RT_0(\Ct_h)$, we write down the detailed construction of $\boldsymbol{\phi}_i$ on the 2D unstructured mesh (cf. \cite{Bahriawati2005,BF1991}) for simplicity. One can also refer to \cite{BF1991} for the construction of basis function of $\boldsymbol{\phi}_i$ on the structured mesh. Let $F_i,i=1,2,3$ be the edges of any triangular $K \in \Ct_h$ opposite to the vertices $P_i,i=1,2,3$, and let $\bfn_i$ denote the outer unit normal of $K$ along $F_i$ and $\bfn^F_{i}$ be the unit normal vector on the edge $F_i$ with a global fixed orientation. Then the local basis function $\boldsymbol{\phi}_i$ on the element $K$ can be defined as
\[
\boldsymbol{\phi}_i = \sigma_i \frac{|F_i|}{2|K|} (\bfx - P_i),
\]
where $\sigma_i = 1$ if $\bfn^F_{i}$ points outward of $K$ and otherwise $\sigma_i = -1$, $|F_i|$ is the length of $F_i$ and $|K|$ is the area of $K$.

Based on the constructions of basis functions for $RT_0(\Ct_h)$ and $Q_h$, we now introduce several matrices and vectors which will be used in the matrix-vector formulation for the P-IMPES scheme. We define
\begin{align*}
\boldsymbol{\rm A}_h = \left( \int_\Omega (\lambda_t \bfK)^{-1} \boldsymbol{\phi}_i \cdot \boldsymbol{\phi}_j \right)_{i,j = 1,\cdots,N} \in \mathbb{R}^{N\times N},  \quad
\boldsymbol{\rm B}_h  = \left( \int_\Omega q_j  \nabla \cdot  \boldsymbol{\phi}_i  \right)_{i=1,\cdots,N, j = 1,\cdots,M}  \in \mathbb{R}^{N\times M} .
\end{align*}
For any $\bfv \in RT_0(\Ct_h)$, noting that the upwind values $f_n (S^{\ast,h}_{w,n})$ and $f_w (S^{\ast,h}_{w,w})$ are both single value on each edge, we have $f_\alpha (S^{\ast,h}_{w,\alpha}) \bfv \in  RT_0(\Ct_h), \alpha = n,w$, and $f_n (S^{\ast,h}_{w,n}) f_w (S^{\ast,h}_{w,w}) \bfv \in  RT_0(\Ct_h)$. We further define
\begin{align*}
\boldsymbol{\rm B}^h_\alpha  &= \left( \int_\Omega q_j \nabla \cdot ( f_\alpha (S^{\ast,h}_{w,\alpha}) \boldsymbol{\phi}_i )  \right)_{i= 1,\cdots,N,j=1,\cdots,M} \in \mathbb{R}^{N\times M} ,
 \\
\boldsymbol{\rm B}^h_c  &= \left( \int_\Omega q_j  \nabla \cdot ( f_n (S^{\ast,h}_{w,n})f_w (S^{\ast,h}_{w,w}) \boldsymbol{\phi}_i  )  \right)_{i=1,\cdots,N, j = 1,\cdots,M} \in \mathbb{R}^{N\times M} .
\end{align*}
Let the vectors $\boldsymbol{\rm b}_D^h , \boldsymbol{\rm g}_h , \boldsymbol{\rm b}^h_c, \boldsymbol{\rm b}^h_{\alpha,D}, \boldsymbol{\rm g}^h_\alpha \in \mathbb{R}^N$ and $\boldsymbol{\rm F}^h_t, \boldsymbol{\rm F}^h_\alpha \in \mathbb{R}^M$, $\alpha = n,w$, be defined as follows:
\begin{align*}
\boldsymbol{\rm b}_D^h = \left( \int_{\Gamma_D} (p^B_n - p^B_w)  \boldsymbol{\phi}_i  \cdot \bfn  \right)_{i=1,\cdots,N}, \quad \boldsymbol{\rm g}_h = \left(\int_\Omega   (\rho_n - \rho_w) g \nabla z \cdot  \boldsymbol{\phi}_i\right)_{i=1,\cdots,N},
\end{align*}
\begin{align*}
&\boldsymbol{\rm b}_c^h(\boldsymbol{\xi},\boldsymbol{\rm S}^{h}_{w}) = \left( \int_{\Omega} (\lambda_t \bfK)^{-1} f_n (\boldsymbol{\rm S}^{h}_{w}) \boldsymbol{\xi}\cdot\boldsymbol{\phi}_i  \right)_{i=1,\cdots,N}, \quad \boldsymbol{\xi} \in RT_0(\Ct_h),\boldsymbol{\rm S}^{h}_{w} \in \mathbb{R}^M,
\\
&\boldsymbol{\rm b}^h_{\alpha,D} = \left( \int_{\Gamma_D} p^B_\alpha  \boldsymbol{\phi}_i  \cdot \bfn  \right)_{i=1,\cdots,N},\quad 
\boldsymbol{\rm g}^h_\alpha = \left(\int_\Omega   \rho_\alpha g \nabla z \cdot  \boldsymbol{\phi}_i \right)_{i=1,\cdots,N},\\
& \boldsymbol{\rm F}^h_t = \left(\int_\Omega F_t \, q_i  \right)_{i=1,\cdots,M}, \quad
\boldsymbol{\rm F}^h_\alpha = \left(\int_\Omega F_\alpha \, q_i  \right)_{i=1,\cdots,M}.
\end{align*}

Now we can start to introduce the matrix-vector formulation for the P-IMPES scheme.

\smallskip

\begin{center}
\fbox{
\parbox{16cm}{
\begin{algorithm}\label{algorithm_main0}
(P-IMPES scheme in matrix-vector formulation) Given $\boldsymbol{\rm S}^{h,n}_w \in \mathbb{R}^M$ at the time step $n$, we seek the solutions of the linear system (\ref{sch2_nonequa_1}-\ref{sch2_nonequa_4}) at the time step $n+1$ as follows:

\smallskip

\noindent {Step 1.} Let $\boldsymbol{\rm p}^{h,n+1}_{nw} = p_c(\boldsymbol{\rm S}^{h,n}_w) \in \mathbb{R}^M$ and seek $\boldsymbol{\rm x}^{h,n+1}_c \in \mathbb{R}^N$ by
\[
\boldsymbol{\rm A}_h \boldsymbol{\rm x}^{h,n+1}_c = \boldsymbol{\rm B}_h p_c(\boldsymbol{\rm S}^{h,n}_w) - \boldsymbol{\rm b}^h_{D}  - \boldsymbol{\rm g}_h.
\]

\noindent {Step 2.} Now we have $\boldsymbol{\xi}^{h,n+1}_c = \sum_{i=1}^N ( \boldsymbol{\rm x}^{h,n+1}_c)_i \boldsymbol{\phi}_i$. We further seek $\boldsymbol{\rm u}^{h,n+1}_t \in \mathbb{R}^N$ and $\boldsymbol{\rm p}^{h,n+1}_w\in \mathbb{R}^M$ by
\begin{align*}
(\boldsymbol{\rm B}^h_n + \boldsymbol{\rm B}^h_w)^T  \boldsymbol{\rm u}^{h,n+1}_t & = \boldsymbol{\rm F}^h_t,\\
\boldsymbol{\rm A}_h \boldsymbol{\rm u}^{h,n+1}_t -  \boldsymbol{\rm B}_h\boldsymbol{\rm p}^{h,n+1}_w & = \boldsymbol{\rm b}^h_c(\boldsymbol{\xi}^{h,n+1}_c,\boldsymbol{\rm S}^{h,n}_w) - \boldsymbol{\rm b}^h_{w,D} - \boldsymbol{\rm g}^h_w.
\end{align*}
Then we seek the non-wetting phase pressure $\boldsymbol{\rm p}^{h,n+1}_n\in \mathbb{R}^M$ by $\boldsymbol{\rm p}^{h,n+1}_n = \boldsymbol{\rm p}^{h,n+1}_{nw}+\boldsymbol{\rm p}^{h,n+1}_w$.

\smallskip

\noindent {Step 3.} Update the wetting phase saturation $\boldsymbol{\rm S}^{h,n+1}_w\in \mathbb{R}^M$ and $\boldsymbol{\rm S}^{h,n+1}_n\in \mathbb{R}^M$ by
\[
\frac{\phi}{\delta t}(\boldsymbol{\rm S}^{h,n+1}_w - \boldsymbol{\rm S}^{h,n}_w) + \boldsymbol{\rm B}^h_w \boldsymbol{\rm u}^{h,n+1}_t  = \boldsymbol{\rm F}^h_w + \boldsymbol{\rm B}^h_c \boldsymbol{\rm x}^{h,n+1}_c,\quad \boldsymbol{\rm S}^{h,n+1}_n = \boldsymbol{1} - \boldsymbol{\rm S}^{h,n+1}_w,
\]
where $\boldsymbol{1} \in \mathbb{R}^M$ denotes the vector with 1 as each element.

\end{algorithm}
}
}
\end{center}

\smallskip

Here, when $\boldsymbol{\rm S}^{h,n}_w$ is given, the matrices $\boldsymbol{\rm B}^h_n,\boldsymbol{\rm B}^h_w,\boldsymbol{\rm B}^h_c$ in the above matrix-vector formulation of the new P-IMPES scheme are generated based on $S^{h,n}_w = \sum_{i=1}^M (\boldsymbol{\rm S}^{h,n}_w)_i q_i$. We also remark that when $ \boldsymbol{\rm u}^{h,n+1}_t $ and $\boldsymbol{\rm x}^{h,n+1}_c$ are obtained, the wetting and non-wetting phase velocities can be obtained by (\ref{update_uw_un}) and used for the determination of upwind values of $S^h_{w,n}$ and $S^h_{w,w}$ on each edge/face at the next time step.

\section{Numerical experiments}\label{section6}
In this section, we present several numerical examples to verify the performance of our new P-IMPES scheme. We assume that the absolute permeability tensor is chosen as $\bfK = K\bfI$, where $\bfI$ is the identity matrix and $K$ is a positive real number with unit $1 \, md$. In the following experiments, the capillary pressure function  is given by Hoteit and Firoozabadi in \cite{Hoteit2008_1} as
$
p_c(S_w) = -\frac{B_c}{\sqrt{K}} {\rm log \,}\overline{S}_w,
$
where $B_c$ is a positive parameter with unit $1\, bar \cdot md^{1/2}$ and $\overline{S}_w$ is the effective saturation defined as in Section \ref{subsect4_2}. We assume the porosity of the porous media $\phi = 0.2$ in all the following examples. Moreover, the relative permeabilities are given by $k_{rw} = \overline{S}_w^\beta$ and $k_{rn} = (1-\overline{S}_w)^\beta$, where $\beta$ is a positive integer number. In the experiments, we set the residual saturations as $S_{rw} = S_{rn}=10^{-6}$ and use the quadratic relative permeabilities, i.e., $\beta = 2$.

\begin{example}\label{example0}
In this example we simulate a drainage process of wetting phase in a heterogeneous porous media with permeability distribution shown in right graph of Figure \ref{ex0_fig1}. The domain size is set as $300\, m \times 150 \,m$, and the problem is simulated on a triangular mesh with 5000 elements and mesh size $h=3\, {\rm m}$. We assume there are subdomains with permeability $K=1\, md$ and $K = 50 \, md$ respectively. The density of wetting phase is set as $\rho_{W}=1000 \, kg/m^{3}$ and the non-wetting phase is $\rho_{N}=800 \, kg/m^{3}$. {The viscosity of the wetting phase is set as $\mu_{W}=1\, cP$ and the viscosity of the non-wetting phase is $\mu_{N}=0.3\, cP$.} The injection of wetting phase is from the left boundary with a rate of $0.63 \, m^3/day$. We assume the two-phase flow is produced on the right boundary with constant pressure of wetting phase $100 \, bar$, and the rest of the boundary is impermeable. We assume the porous media is almost fully filled with non-wetting phase flow at the initial state and the elements intersected with inflow boundary are filled with wetting phase flow. We test the parameter $B_c=0$ and $B_c=60$ in the capillary pressure function and use the time step size as $0.1\, day$.

\begin{figure}[htbp]
\includegraphics[width=8cm,height=4cm,clip,trim=0cm 1cm 1cm 0cm]{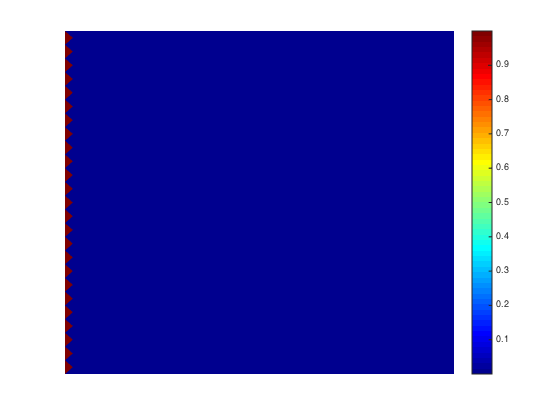}  
\includegraphics[width=8cm,height=4cm,clip,trim=1cm 1cm 0cm 0cm]{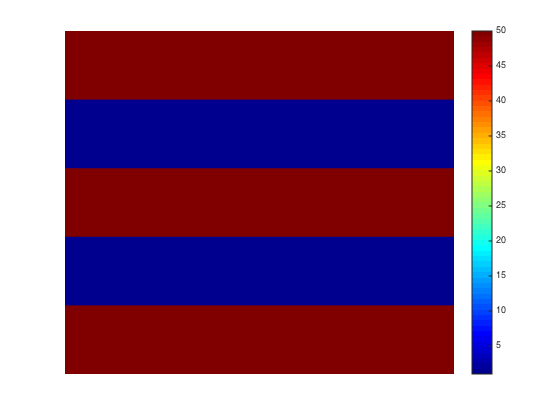}
\caption{\footnotesize (Example \ref{example0})  Left: wetting phase saturation at the initial state. Right: permeability.}\label{ex0_fig1}
\end{figure}

\begin{figure}[htbp]
\includegraphics[width=8cm,height=4cm,clip,trim=0cm 1cm 1cm 0cm]{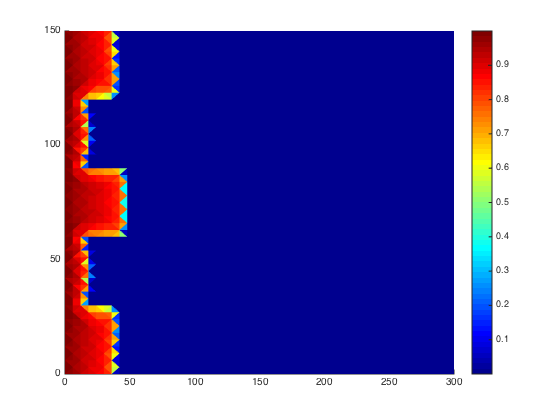}  
\includegraphics[width=8cm,height=4cm,clip,trim=1cm 1cm 0cm 0cm]{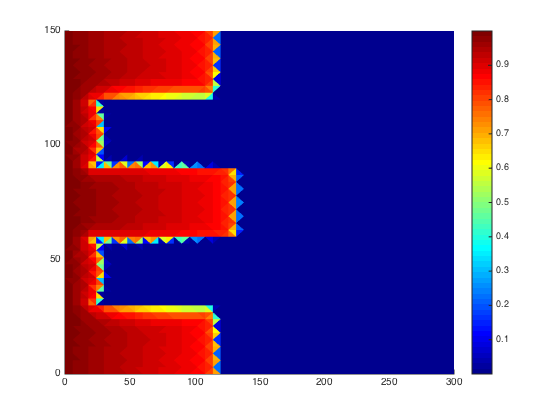}
\includegraphics[width=8cm,height=4cm,clip,trim=0cm 1cm 1cm 0cm]{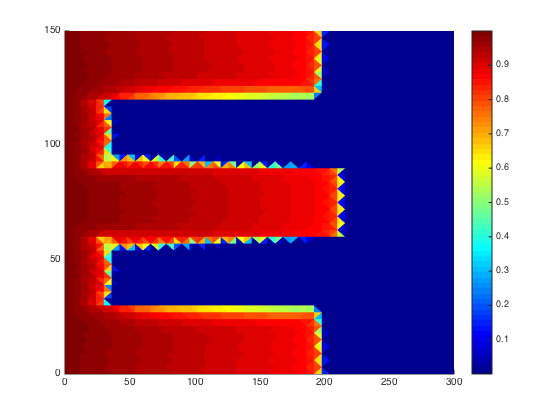}  
\includegraphics[width=8cm,height=4cm,clip,trim=1cm 1cm 0cm 0cm]{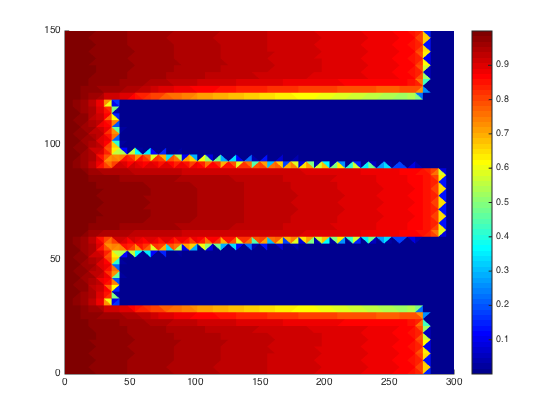}
\caption{\footnotesize (Example \ref{example0} for the case of $B_c=0$) Saturation of wetting phase in the drainage process at the time steps 100, 300, 500 and 700.} \label{ex0_fig2}
\end{figure}

\begin{figure}[htbp]
\includegraphics[width=5.5cm,height=4.5cm,clip,trim=0cm 0.5cm 0cm 0cm]{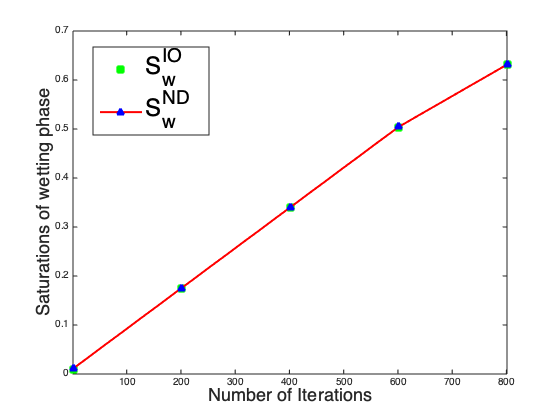}  
\includegraphics[width=5.5cm,height=4.5cm,clip,trim=0cm 0.5cm 0cm 0cm]{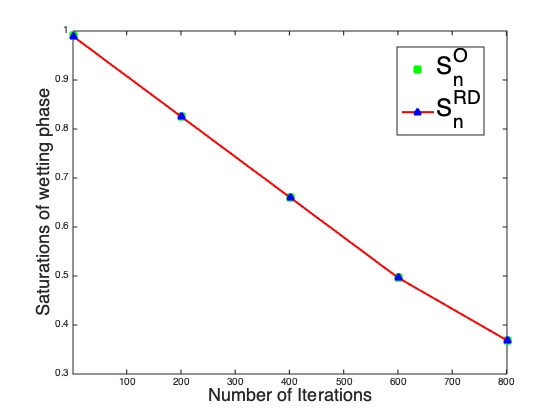}
\caption{\footnotesize (Example \ref{example0} for the case of $B_c=0$)  Left: spatial average of wetting phase saturation. Right: spatial average of non-wetting phase saturation.}\label{ex0_fig4}
\end{figure}

We firstly test the problem in a heterogeneous porous media without capillary pressure, i.e., the case of $B_c=0$. The drainage process at different times steps is illustrated in Figure \ref{ex0_fig2}. Then we further test the problem in a heterogeneous porous media with capillary pressure, i.e., the case of $B_c=60$, and the saturations of wetting phase at different time steps is shown in Figure \ref{ex0_fig3}. We can see that the drainage process for the cases of $B_c=0$ and $B_c=60$ is different. When the capillary pressure is not considered, we can see that the injected water flows faster in the more permeable layers. When we take the capillary pressure into account, the flow in the more permeable layers slows down due to the contrast in capillary pressure.

\begin{figure}[htbp]
\includegraphics[width=8cm,height=4cm,clip,trim=0cm 1cm 1cm 0cm]{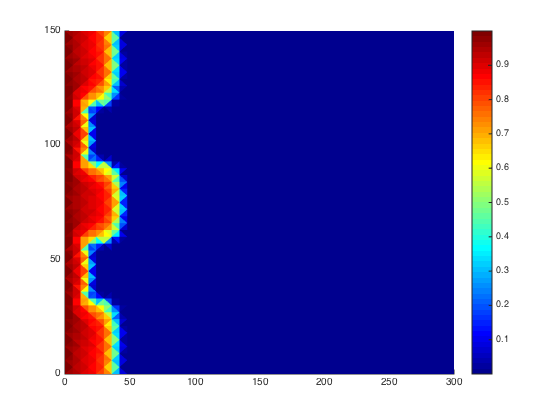}  
\includegraphics[width=8cm,height=4cm,clip,trim=1cm 1cm 0cm 0cm]{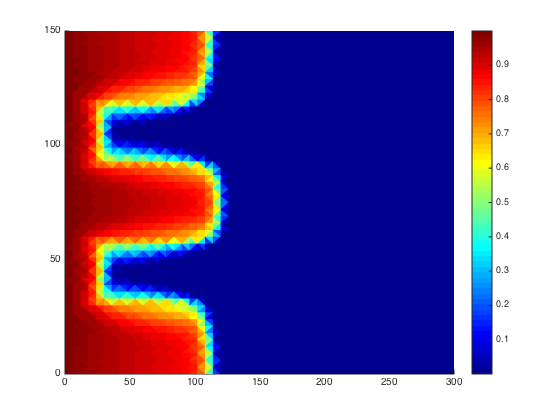}
\includegraphics[width=8cm,height=4cm,clip,trim=0cm 1cm 1cm 0cm]{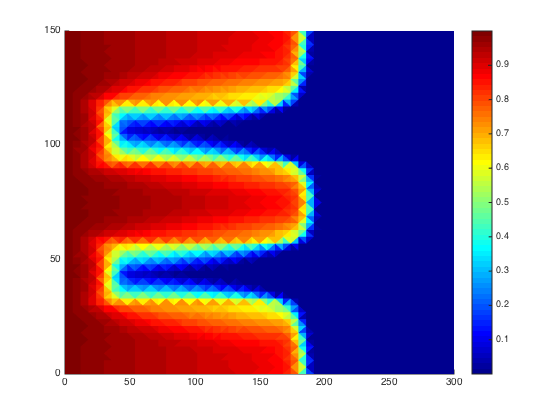}  
\includegraphics[width=8cm,height=4cm,clip,trim=1cm 1cm 0cm 0cm]{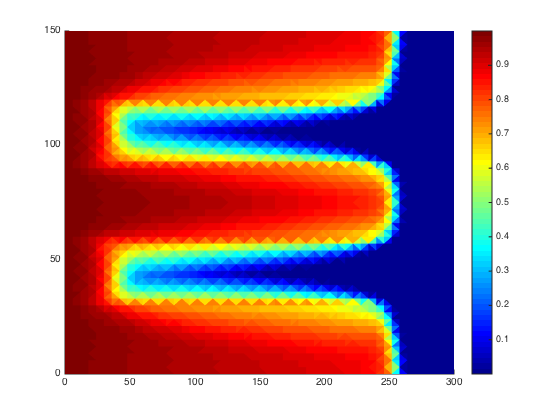}
\caption{\footnotesize (Example \ref{example0} for the case of $B_c=60$) Saturation of wetting phase in the drainage process at the time steps 100, 300, 500 and 700.} \label{ex0_fig3}
\end{figure}

\begin{figure}[htbp]
\includegraphics[width=5.5cm,height=4.5cm,clip,trim=0cm 0.5cm 0cm 0cm]{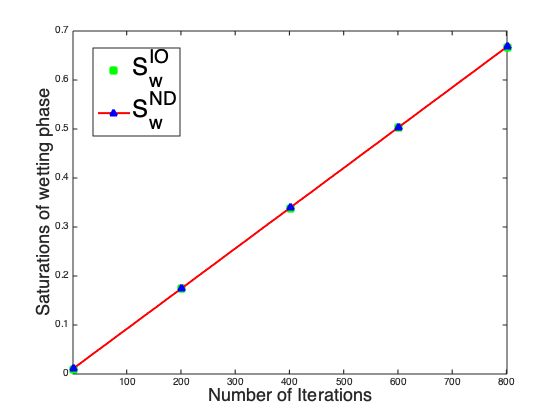}  
\includegraphics[width=5.5cm,height=4.5cm,clip,trim=0cm 0.5cm 0cm 0cm]{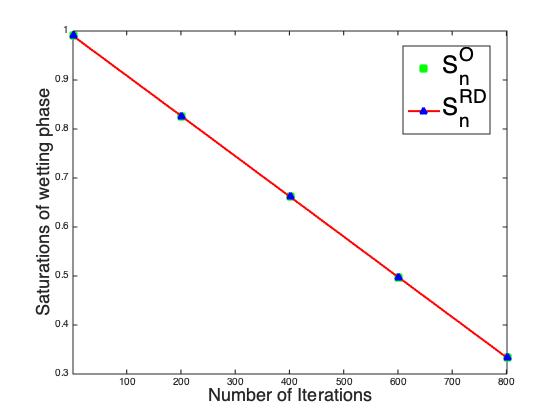}
\caption{\footnotesize (Example \ref{example0} for the case of $B_c=60$)  Left: spatial average of wetting phase saturation. Right: spatial average of non-wetting phase saturation.}\label{ex0_fig5}
\end{figure}

We remark that the mean saturation of wetting phase (spatial average of $S_w$) are calculated by the injection and the simulation results based on the P-IMPES scheme, and the mean saturation of non-wetting phase are also calculated based on the P-IMPES scheme. We denote by $S^{IO}_w$ the mean saturation based on the summation of the injected wetting phase at the new time step and the wetting phase in the porous media at the old time step, $S^{ND}_w$ the mean saturation based on the summation of the wetting phase in the porous media and the discharge of wetting phase at the new time step. Similarly, we denote by $S^{O}_n$ the mean saturation of the non-wetting phase at the old time step and $S^{RD}_n$ the mean saturation based on the summation of the residual non-wetting phase in the porous media and the discharge of non-wetting phase at the new time step. As shown in Figure \ref{ex0_fig4} and Figure \ref{ex0_fig5}, we can see that the value of $S^{IO}_w$ meets well with $S^{ND}_w$, and the value of $S^{O}_n$ also meets well with $S^{RD}_n$ for both cases of $B_c=0$ and $B_c=60$. This clearly verifies the mass conservation property of our algorithm.

\begin{figure}[htbp]
\includegraphics[width=5.5cm,height=4.5cm,clip,trim=0cm 0.5cm 0cm 0cm]{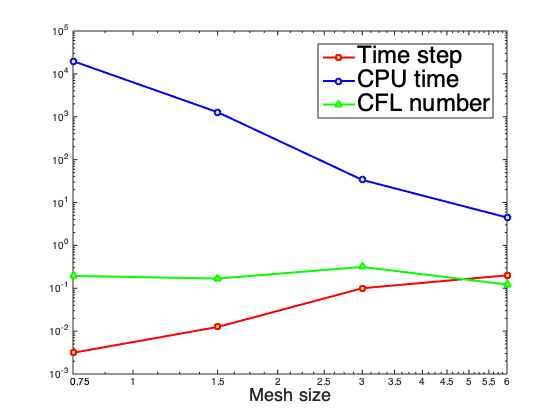}
\includegraphics[width=5.5cm,height=4.5cm,clip,trim=0cm 0.5cm 0cm 0cm]{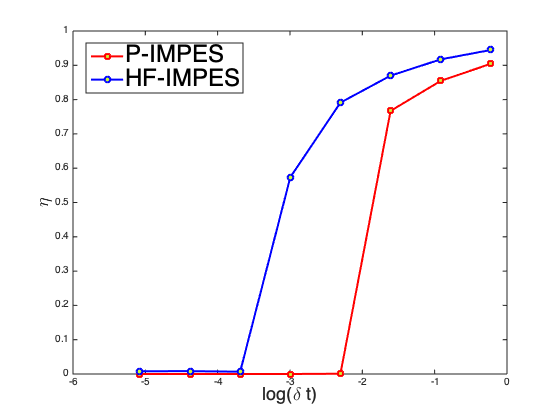}
\caption{\footnotesize (Example \ref{example0} for the case of $B_c=60$) Left: CPU time and feasible time step size for the solution of the problem on the meshes with different mesh sizes. Right: plot of the parameter $\eta$ by the P-IMPES and HF-IMPES schemes respectively.}\label{ex0_fig6}
\end{figure}

We also remark that the CFL numbers we use for the cases of $B_c=0$ and $B_c=60$ based on the P-IMPES scheme are 0.2 and 0.3 respectively on the mesh with mesh size $h=3\, {\rm m}$. The feasible time step size, the CPU time and the CFL number are shown in the left graph of Figure \ref{ex0_fig6} for the case of $B_c=60$ on the meshes with different mesh sizes for the simulations after 10 days, and we find that the complexity of the algorithm correspondingly increases on the fine mesh.

We further compare the feasible time step size which can be used for the P-IMPES and HF-IMPES schemes. Let  
\[
\eta = 1 - \frac{\|\|S^h_w(\bfx,t)\chi_{S^h_w}\|_{L^2(\Omega)}\|_{L^2(0,T)}}{\|\|S^h_w(\bfx,t)\|_{L^2(\Omega)}\|_{L^2(0,T)}},
\]
where the indicator function $\chi_{S^h_w}$ is defined as $\chi_{S^h_w} =1$ if $0\leq S^h_w\leq 1$ and otherwise $\chi_{S^h_w} =0$. We show the parameter $\eta$ in the right graph of Figure \ref{ex0_fig6} for the case of $B_c=60$ on the mesh with mesh size $h=3\, {\rm m}$ for the simulation after 10 days by the P-IMPES and HF-IMPES schemes respectively. From the theoretical analysis for the P-IMPES scheme, we know that the saturations of both phases can be bounded if the time step size is small enough, i.e., $\eta=0$ if $\delta t$ is small enough. This property may not always be true for the HF-IMPES scheme. We can see from the right graph of Figure \ref{ex0_fig6} that the feasible time step size for the P-IMPES scheme can be chosen much larger than that used for the HF-IMPES scheme.

\end{example}

\begin{example}\label{example1}
Next we simulate a drainage process of wetting phase in a heterogeneous porous media with domain size $180\, m \times 180 \,m$. We test the problem on a triangular mesh with 7200 elements.
The distribution of permeability in logarithmic value is described in the right graph of Figure \ref{ex1_fig1}. The permeability data are obtained from the SPE10 data set which can be downloaded from the SPE website (http://www.spe.org/web/csp/). We use the $60 \times 60$ cutting data of the horizontal permeability in one of the horizontal levels. The density of wetting phase is set as $\rho_{W}=1000 \, kg/m^{3}$ and the non-wetting phase is $\rho_{N}=660 \, kg/m^{3}$. {The viscosity of the wetting phase is set as $\mu_{W}=1\, cP$ and the viscosity of the non-wetting phase is $\mu_{N}=0.45\, cP$.} The injection of wetting phase is from the bottom-left boundary of one mesh size, with a rate of $1.97 \, m^3/day$, as shown in the left graph of Figure \ref{ex1_fig1} for the initial state of $S_w$. We assume the two-phase flow is produced on the top-right boundary of one mesh size with constant pressure of wetting phase $100 \, bar$, and the rest of the boundary is impermeable. We also assume the porous media is almost fully filled with non-wetting phase flow at the initial state, and use the parameter $B_c=1$ in the capillary pressure and the time step size as $0.2\, day$ in this case.

\begin{figure}[htbp]
\includegraphics[width=5.5cm,height=4cm,clip,trim=0cm 1cm 0cm 0cm]{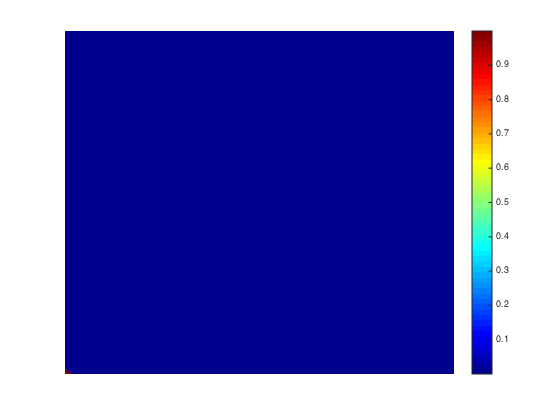}  
\includegraphics[width=5.5cm,height=4cm,clip,trim=0cm 1cm 0cm 0cm]{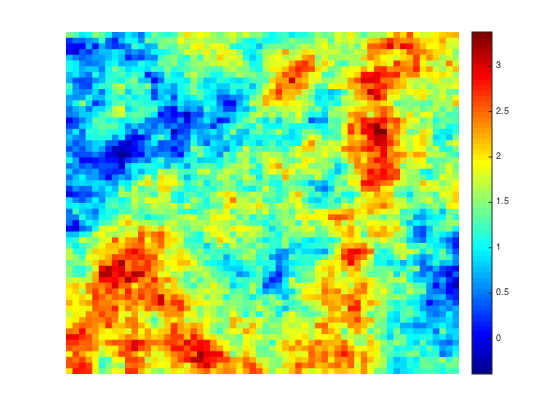}
\caption{\footnotesize (Example \ref{example1})  Left: wetting phase saturation at the initial state. Right: permeability.}\label{ex1_fig1}
\end{figure}

\begin{figure}[htbp]
\includegraphics[width=5.5cm,height=4cm,clip,trim=0cm 1cm 0cm 0cm]{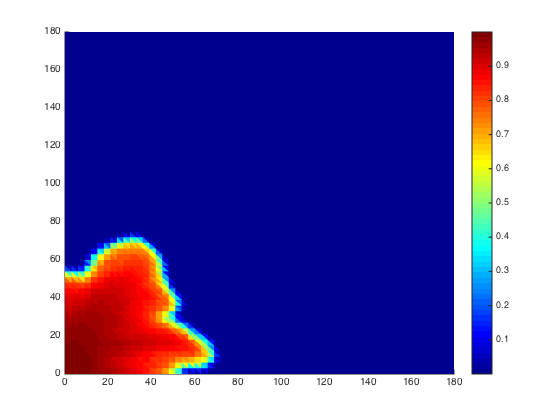}  
\includegraphics[width=5.5cm,height=4cm,clip,trim=0cm 1cm 0cm 0cm]{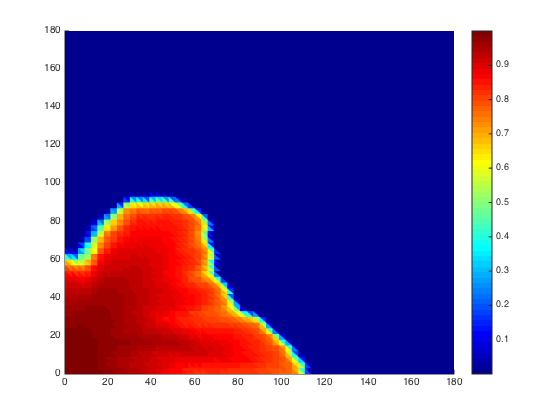}
\includegraphics[width=5.5cm,height=4cm,clip,trim=0cm 1cm 0cm 0cm]{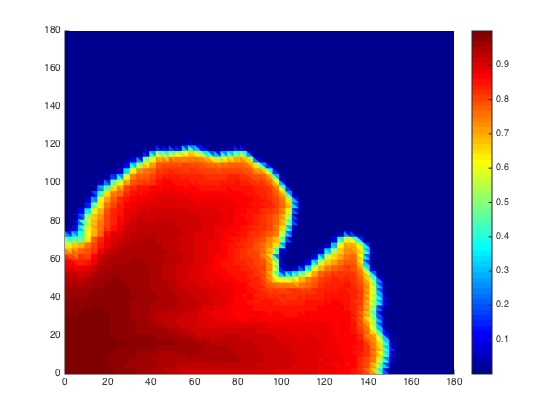}  
\includegraphics[width=5.5cm,height=4cm,clip,trim=0cm 1cm 0cm 0cm]{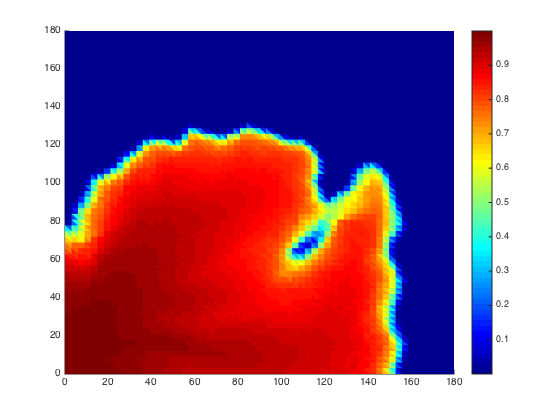}
\caption{\footnotesize (Example \ref{example1}) Saturation of wetting phase in the drainage process at the time steps 500, 1000, 2000 and 2500.} \label{ex1_fig2}
\end{figure}

\begin{figure}[htbp]
\includegraphics[width=5.5cm,height=4.5cm,clip,trim=0cm 0.5cm 0cm 0cm]{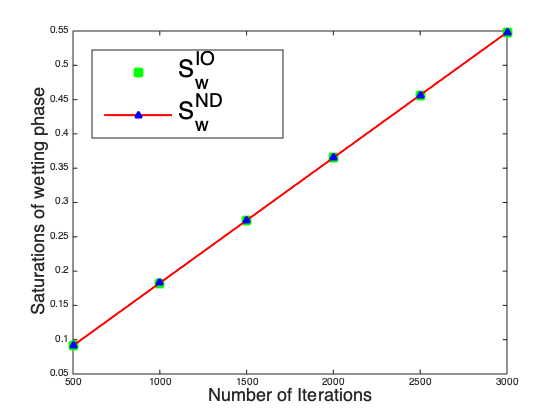}  
\includegraphics[width=5.5cm,height=4.5cm,clip,trim=0cm 0.5cm 0cm 0cm]{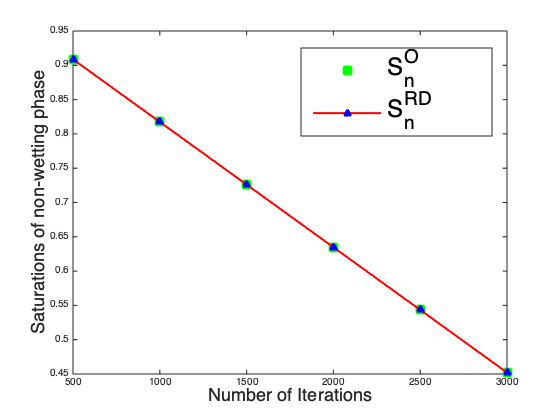}
\caption{\footnotesize (Example \ref{example1})  Left: spatial average of wetting phase saturation. Right: spatial average of non-wetting phase saturation.}\label{ex1_fig3}
\end{figure}

The drainage process at different times steps is illustrated in Figure \ref{ex1_fig2} which shows the saturations of wetting phase at different time steps. As shown in the two graphs of Figure \ref{ex1_fig3}, we can see that the value of $S^{IO}_w$ meets well with $S^{ND}_w$, and the value of $S^{O}_n$ also meets well with $S^{RD}_n$. This clearly verify the mass conservation property of our algorithm.

\end{example}

\begin{example}\label{example2}
In this example we test our algorithm for a two-phase counter flow problem with the domain size $250 \, m \times 250\, m$. As shown in the left graph of Figure \ref{ex2_fig1}, the water initially lies in the middle of lower layer, which is lighter than the heavy oil that fills out the rest of the domain. The densities of water and heavy oil are $\rho_{W}=1000 \, kg/m^{3}$ and $\rho_{N}=1200\, kg/m^{3}$ respectively, and the viscosities of water (wetting phase) and heavy oil (non-wetting phase) are set as $\mu_{W}=1\, cP$ and $\mu_{N}=0.45 \, cP$. The gravity is taken into consideration in this case. We assume the permeability $K=1 \, md$ in the left middle of the domain and $K=1000\, md$ in the rest of the domain which is shown in the right graph of Figure \ref{ex2_fig1}. We test the problem on a triangular mesh with 5000 elements
and assume the boundary is impermeable. We test the parameter in the capillary pressure function for $B_c=10$, and the time step size is set as $0.05\, day$.

\begin{figure}[htbp]
\includegraphics[width=5.5cm,height=4cm,clip,trim=0cm 1cm 0cm 0cm]{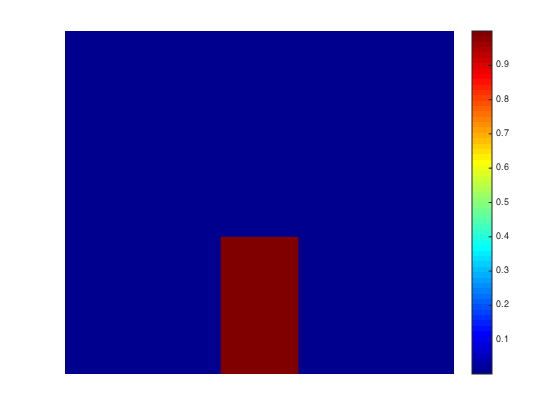}  
\includegraphics[width=5.5cm,height=4cm,clip,trim=0cm 1cm 0cm 0cm]{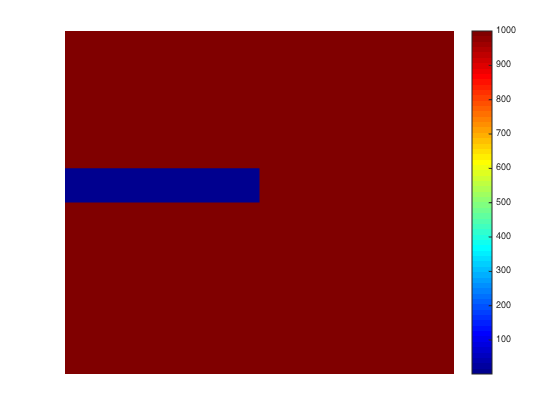}
\caption{\footnotesize (Example \ref{example2})  Left: wetting phase saturation at the initial state. Right: permeability.}\label{ex2_fig1}
\end{figure}


\begin{figure}[htbp]
\includegraphics[width=5.5cm,height=4cm,clip,trim=0cm 1cm 0cm 0cm]{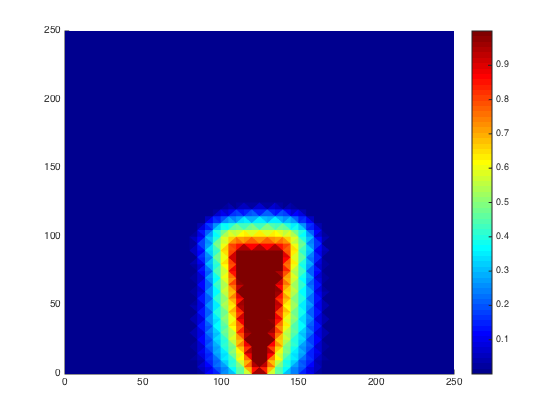}  
\includegraphics[width=5.5cm,height=4cm,clip,trim=0cm 1cm 0cm 0cm]{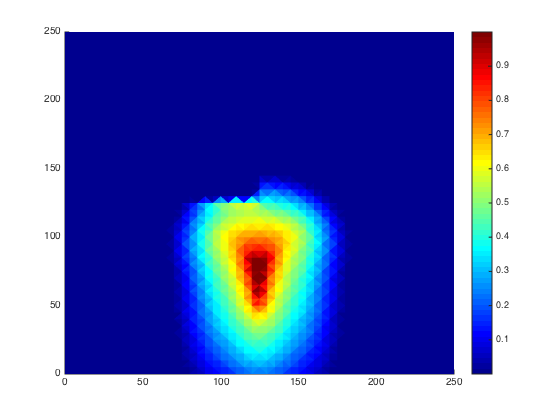}
\includegraphics[width=5.5cm,height=4cm,clip,trim=0cm 1cm 0cm 0cm]{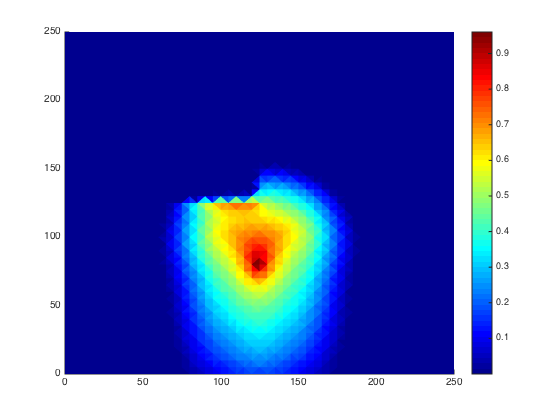}  
\includegraphics[width=5.5cm,height=4cm,clip,trim=0cm 1cm 0cm 0cm]{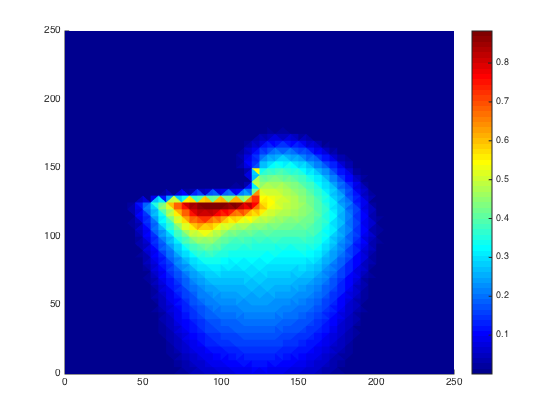}
\caption{\footnotesize (Example \ref{example2} for the case of $B_c=10$) Saturation of wetting phase in the drainage process at the time steps 300, 900, 1500 and 2400.} \label{ex2_fig3}
\end{figure}

\begin{figure}[htbp]
\includegraphics[width=5.2cm,height=4.5cm,clip,trim=0cm 0.5cm 0cm 0cm]{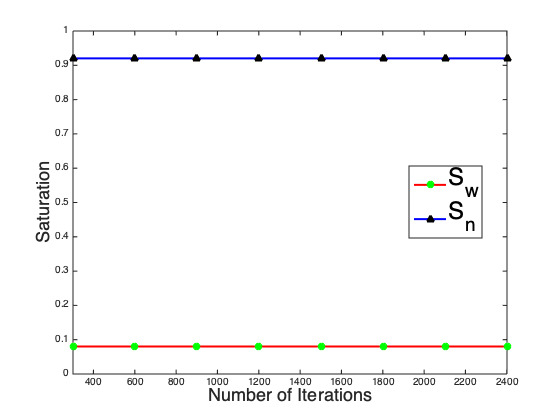}
\caption{\footnotesize (Example \ref{example2})  Spatial average of phase saturations based on our algorithm.}\label{ex2_fig4}
\end{figure}

\begin{figure}[htbp]
\includegraphics[width=5.5cm,height=4.5cm,clip,trim=0cm 0.5cm 0cm 0cm]{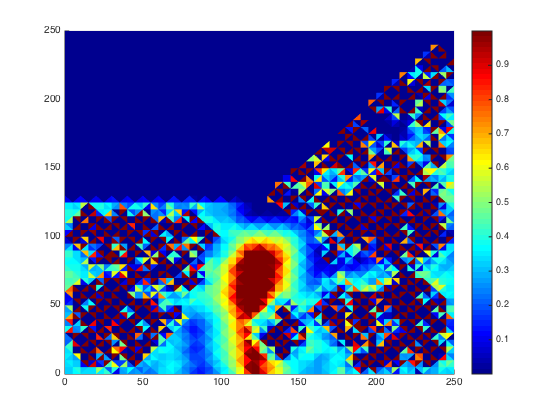}
\includegraphics[width=5.2cm,height=4.5cm,clip,trim=0cm 0.5cm 0cm 0cm]{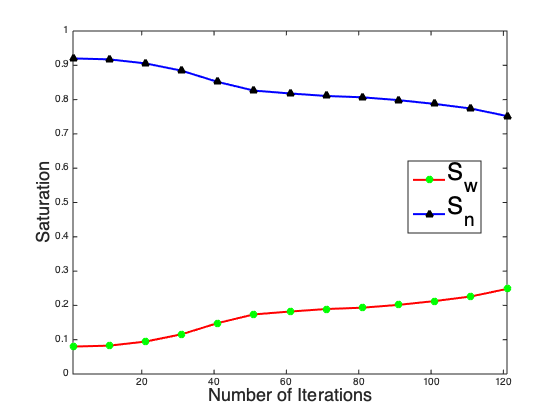}
\caption{\footnotesize (Example \ref{example2})  Left: spatial average of phase saturations based on the HF-IMPES scheme at the time step 120. Right: spatial average of phase saturations based on the HF-IMPES scheme.}\label{ex2_fig5}
\end{figure}

We show the saturations of wetting phase at different time steps in Figure \ref{ex2_fig3}. We can see that the water (the wetting phase) raises up gradually under the effect of gravity. The mean saturations of wetting and non-wetting phases are shown in Figure \ref{ex2_fig4}. Noting that we use the impermeable boundary condition, we can see that the mass conservation property holds well based on our algorithm. We also test the conventional HF-IMPES scheme for the case of $B_c=10$ with time step size $0.05\, day$. Since the saturations can not be bounded by one if the HF-IMPES scheme is used, we use the cutting method in the simulation if the saturations are larger than one. We can see from the left graph in Figure \ref{ex2_fig5} that the solution of saturation of wetting phase based on the HF-IMPES scheme blows out at the time step 120, and from the right graph in Figure \ref{ex2_fig5} we can also see that the mass-conservation property does not hold well for the conventional HF-IMPES scheme.
\end{example}

\begin{example}\label{example3}
We further test our algorithm for a two-phase counter flow problem in a heterogeneous porous media with domain size $250 \, m \times 250\, m$. As shown in the left graph of Figure \ref{ex3_fig1}, the water initially lies in part of lower layer, which is lighter than the heavy oil that fills out the rest of the domain. The densities of water and heavy oil are $\rho_{W}=1000 \, kg/m^{3}$ and $\rho_{N}=1200\, kg/m^{3}$ respectively, and the viscosities of water (wetting phase) and heavy oil (non-wetting phase) are set as $\mu_{W}=1\, cP$ and $\mu_{N}=0.45 \, cP$. The gravity is taken into consideration in this case. The permeability in logarithmic value is shown in the right graph of Figure \ref{ex3_fig1}. We test the problem on a triangular mesh with 5000 elements
and assume the boundary is impermeable. We test the parameter in the capillary pressure function for $B_c=1$ and the time step size is set as $1\, day$ in this case.

\begin{figure}[htbp]
\includegraphics[width=5.5cm,height=4cm,clip,trim=0cm 1cm 0cm 0cm]{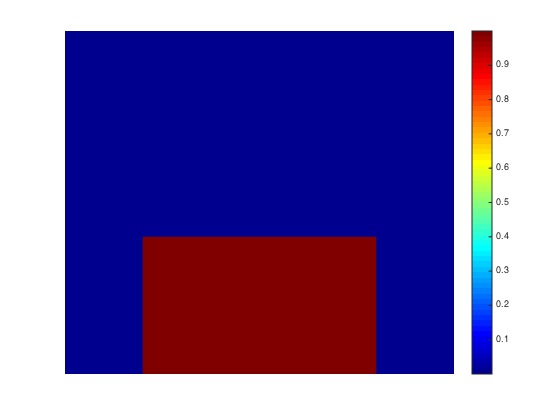}  
\includegraphics[width=5.5cm,height=4cm,clip,trim=0cm 1cm 0cm 0cm]{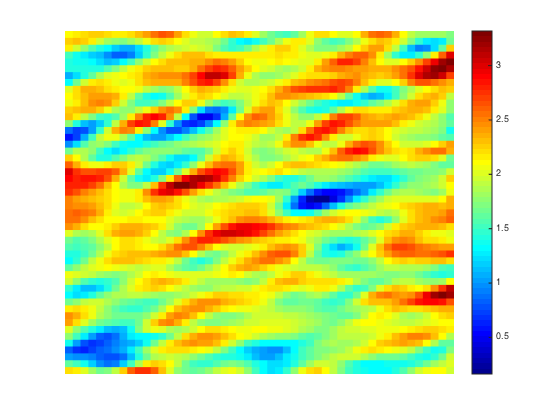}
\caption{\footnotesize (Example \ref{example3})  Left: wetting phase saturation at the initial state. Right: permeability.}\label{ex3_fig1}
\end{figure}


\begin{figure}[htbp]
\includegraphics[width=5.5cm,height=4cm,clip,trim=0cm 1cm 0cm 0cm]{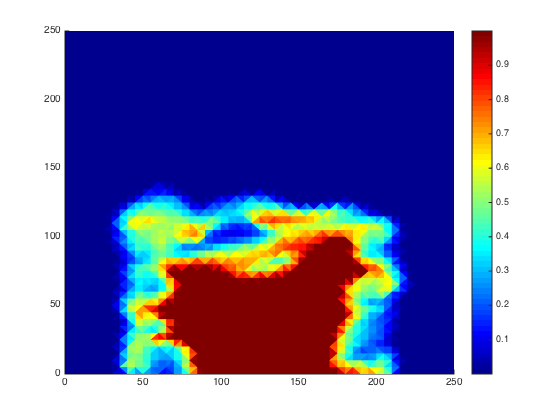}  
\includegraphics[width=5.5cm,height=4cm,clip,trim=0cm 1cm 0cm 0cm]{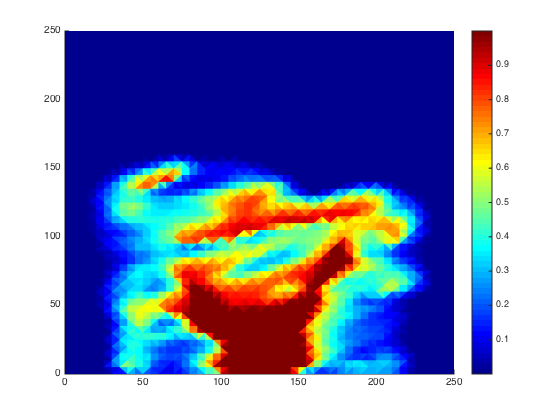}
\includegraphics[width=5.5cm,height=4cm,clip,trim=0cm 1cm 0cm 0cm]{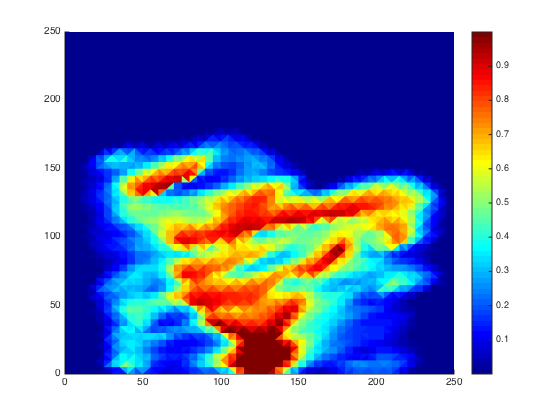}  
\includegraphics[width=5.5cm,height=4cm,clip,trim=0cm 1cm 0cm 0cm]{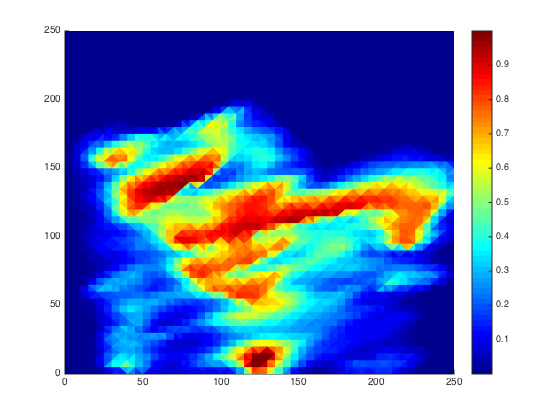}
\caption{\footnotesize (Example \ref{example3} for the case of $B_c=1$) Saturation of wetting phase in the drainage process at the time steps 300, 900, 1500 and 2400.} \label{ex3_fig3}
\end{figure}

\begin{figure}[htbp]
\includegraphics[width=5.2cm,height=4.5cm,clip,trim=0cm 0.5cm 0cm 0cm]{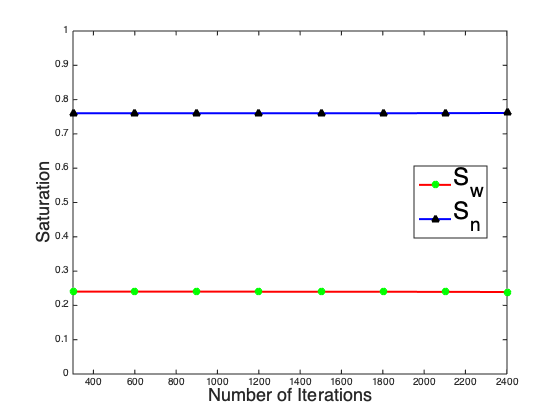}
\caption{\footnotesize (Example \ref{example3})  Spatial average of phase saturations based on the our algorithm.}\label{ex3_fig4}
\end{figure}

\begin{figure}[htbp]
\includegraphics[width=5.5cm,height=4.5cm,clip,trim=0cm 0.5cm 0cm 0cm]{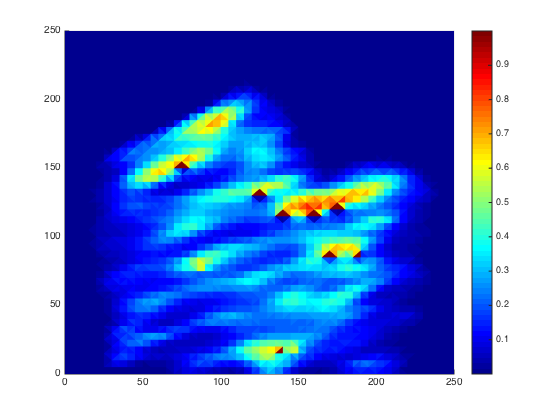}
\includegraphics[width=5.2cm,height=4.5cm,clip,trim=0cm 0.5cm 0cm 0cm]{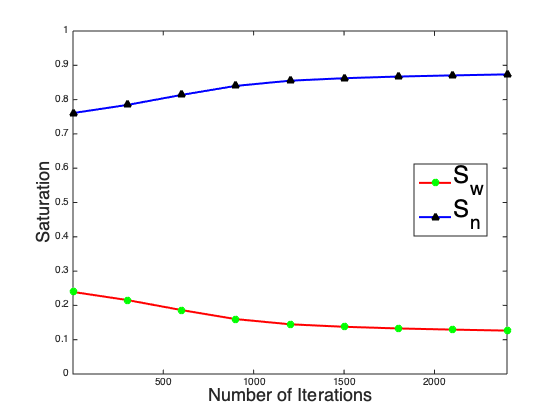}
\caption{\footnotesize (Example \ref{example3})  Left: spatial average of phase saturations based on the HF-IMPES scheme at the time step 2400. Right: spatial average of phase saturations based on the HF-IMPES scheme.}\label{ex3_fig5}
\end{figure}

We show the saturations of wetting phase at different time steps for the case of $B_c=1$ in Figure \ref{ex3_fig3}. We can also see that the water (the wetting phase) raises up gradually under the effect of gravity. The mean saturations of wetting and non-wetting phases are shown in Figure \ref{ex3_fig4}, we can see that the mass conservation property holds well based on our algorithm. We also test the conventional HF-IMPES scheme for the case of $B_c=1$. The saturations of wetting phase at the time step 2400 is shown in the left graph of Figure \ref{ex3_fig5}. We can observe that some values of wetting phase saturation are cut by one due to the fact that these values are larger than one based on the HF-IMPES scheme. From the right graph in Figure \ref{ex3_fig4} we can see that the mass-conservation property does not hold well for this case based on the conventional HF-IMPES scheme.

\end{example}

\section{Conclusion}\label{section7}
In this paper we aim to design a physics-preserving IMPES scheme for the simulation of incompressible and immiscible two-phase flow in heterogeneous porous media with capillary pressure. The key ideas of the new algorithm are to rewrite the Darcy flows for both phases in the formulation based on the total velocity and an auxiliary velocity referring to as the capillary potential gradient, and to obtain the total conservation equation by summing the discretized conservation equation for each phase. We find that the new P-IMPES scheme is locally mass conservative for both phases and it also retains the desired property that the total velocity is continuous in its normal direction. Another merit of the new algorithm is that the new scheme is unbiased with regard to the two phases and the saturation of each phase can be proved to be bounds-preserving if the time step size is small enough. For the numerical experiments that we have performed, we demonstrated that the proposed scheme always holds the mass-conservation property. We believe that our algorithm can also be extended to the compressible and immiscible two-phase ow in porous media, which is our on-going work.

\providecommand{\href}[2]{#2}

\end{document}